\documentclass[12pt, leqno]{article}
\usepackage{amsmath}
\usepackage{amsthm}
\usepackage{amssymb}
\usepackage{latexsym}
\usepackage{graphicx}
\allowdisplaybreaks[1]
\usepackage{amsfonts}
\usepackage{ascmac}
\usepackage{color}
\usepackage{enumerate}

\theoremstyle{definition}
\theoremstyle{plain}
\allowdisplaybreaks[1]
%

\topmargin=0mm
\headheight=0mm
\headsep=0mm
\textwidth=158mm
\textheight=243mm
\oddsidemargin=0mm
\evensidemargin=0mm
\baselineskip=4mm
\parskip=5pt
\parindent=13pt
\pagestyle{plain}
\date{}
%

%
%
\newtheorem{Thm}{Theorem}[section]
\newtheorem{Prop}[Thm]{Proposition}
\newtheorem{Lemma}[Thm]{Lemma}

\newtheorem{Rem}[Thm]{Remark}

\newcommand{\dt}{\Delta t}
\newcommand{\dx}{\Delta x}

\newcommand{\p}{\partial}

\newcommand{\dis}{\displaystyle}

\newcommand{\norm}{\parallel}

\newcommand{\Z}{{\mathbb Z}}

\newcommand{\N}{{\mathbb N}}
\newcommand{\R}{{\mathbb R}}


\newcommand{\G}{ \mathcal{G}}
\newcommand{\tG}{ \tilde{\mathcal{G}}}
\newcommand{\1}{{\bf 1} }

\newcommand{\ep}{\varepsilon }

%

\def\text#1{\mbox{#1 }}

\title{\bf Stochastic and variational approach to finite difference approximation of Hamilton-Jacobi equations}
\author{Kohei Soga
\footnote{Department of Mathematics, Faculty of Science and Technology, Keio University, 3-14-1 Hiyoshi, Kohoku-ku, Yokohama, 223-8522, Japan. E-mail:  soga@math.keio.ac.jp 
}}
%
%
\begin{document}
\maketitle
\begin{abstract} 
\noindent The author presented a stochastic and variational approach to the Lax-Friedrichs finite difference scheme applied to hyperbolic scalar conservation laws and the corresponding Hamilton-Jacobi equations with convex and superlinear Hamiltonians  in the one-dimensional periodic setting, showing new results on the stability and convergence of the scheme [Soga, Math. Comp. (2015)]. In the current paper, we extend these results to the higher dimensional setting. Our framework  with a deterministic scheme provides approximation of  viscosity solutions of Hamilton-Jacobi equations, their spatial derivatives and the backward characteristic curves at the same time, within an arbitrary time interval.  The proof is based on stochastic calculus of variations with random walks; a priori boundedness of minimizers of the variational problems that verifies a CFL type stability condition; the law of large numbers for random walks under the hyperbolic scaling limit.  Convergence of approximation and the rate of convergence are obtained in terms of probability theory.   The idea is reminiscent of the stochastic and variational approach to the vanishing viscosity method introduced in [Fleming, J. Differ. Eqs (1969)]. 
 
\medskip

\noindent{\bf Keywords:} finite difference scheme;  Hamilton-Jacobi equation; viscosity solution; calculus of variations; random walk; law of large numbers  \medskip

\noindent{\bf AMS subject classifications:}  65M06; 35L65; 49L25; 60G50
\end{abstract}
%
\setcounter{section}{0}
\setcounter{equation}{0}
\section{Introduction}
We consider finite difference approximation to viscosity solutions of initial value problems for Hamilton-Jacobi equations
\begin{eqnarray}\label{HJ}
\left\{
\begin{array}{lll}
&v_t(x,t)+H(x,t,v_x(x,t))=h\mbox{\quad in $\R^d\times(0,T]$},
\medskip\\
&v(x,0)=v^0(x)\mbox{\quad on $\R^d$,\quad $v^0\in Lip_r(\R^d)$, $|v^0|\le R$},
\end{array}
\right.
\end{eqnarray}
where $v_t:=\frac{\partial v}{\partial t}$, $v_x:=(\frac{\partial v}{\partial x^1},\ldots,\frac{\partial v}{\partial x^d})$, $d\ge1$, $h$ is a given constant and $Lip_r(\R^d)$ denotes the family of Lipschitz functions: $\R^d\to\R$ with Lipschitz constants bounded by $r>0$. Initial data is assumed to be bounded by $R>0$. We arbitrarily fix the constants $T,r,R$. The function $H$ is assumed to satisfy the following (H1)--(H5):
\begin{enumerate}
\item[(H1)] $H(x,t,p):\R^d\times\R\times\R^d\to\R$, $C^2$,
\item[(H2)] $H_{pp}(x,t,p)$ is positive definite in $\R^d\times\R\times\R^d$,
\item[(H3)] $H$ is uniformly superlinear with respect to $p$, namely, for each $a\ge0$ there exists $b_1(a)\in\R$ such that $H(x,t,p)\ge a\norm p\norm+b_1(a)$  in $\R^d\times\R\times\R^d$,
\item[(H4)] $H, H_{x^i},H_{p^i}, H_{x^ix^j},H_{x^ip^j},H_{p^ip^j}$ are uniformly bounded on $\R^d\times\R\times K$ for each compact set $K\subset \R^d$  for $i,j=1,\ldots,d$,
\item[(H5)] For the Legendre transform of $H(x,t,\cdot)$, denoted by $L$, there exists $\alpha>0$ such that $| L_{x^j}|\le \alpha (1+|L|)$ in $\R^d\times\R\times\R^d$ for $j=1,\ldots,d$.
\end{enumerate} 
Here, $\norm x\norm:=\sqrt{\sum_{1\le j\le d}(x^j)^2}$ and $x\cdot y:=\sum_{1\le j\le d} x^jy^j$ for $x,y\in\R^d$. Note that, due to (H1)--(H3), the function $L(x,t,\xi):\R^d\times\R\times\R^d\to\R$ is well-defined and is given by
$$L(x,t,\xi)=\sup_{p\in\R^d}\{p\cdot\xi-H(x,t,p)\}.$$ 
We will see properties of $L$ in Section 3. 

These problems arise in many fields such as theories of optimal control, dynamical systems and so on. Our motivation comes from  weak KAM theory, which connects viscosity solutions of Hamilton-Jacobi equations and Hamiltonian/Lagrangian dynamics. 
The central objects in weak KAM theory are viscosity solutions, their spatial derivatives and the characteristic curves.  In numerical analysis of weak KAM theory, it is important to develop a method that is able to approximate all of these objects at the same time. See \cite{Nishida-Soga}, \cite{Soga2}, \cite{Soga3}, \cite{Soga4} for recent development of numerical analysis of weak KAM theory based on such a method in one-dimensional problems with Tonelli Hamiltonians, i.e., $H$ is periodic in $(x,t)$ with (H1)--(H3). In the one-dimensional case,  (\ref{HJ}) is equivalent to the scalar conservation law  
\begin{eqnarray}\label{CL}
\left\{
\begin{array}{lll}
&u_t+H(x,t,u)_x=0\mbox{\quad in $\R\times(0,T]$},
\medskip\\
&u(x,0)=u^0(x)\mbox{\quad on $\R$,\quad $u^0\in L^\infty(\R)$}.
\end{array}
\right.
\end{eqnarray}
If $u^0=v^0_x$, the viscosity solution $v$ or entropy solution $u$ is derived from the other and they satisfy the relation $u=v_x$. Therefore, approximation of $u$ implies approximation of $v$. However, the other way around is not necessarily true. In the pioneering work \cite{Oleinik} on finite difference approximation of (\ref{CL}) with a wide class of functions $H$, stability and convergence of the Lax-Friedrichs finite difference scheme are proved within a restricted time interval based on a functional analytic approach. The restriction depends on the growth rate of $H$ for $|p|\to\infty$. 
The author recently announced a stochastic and variational approach to the Lax-Friedrichs scheme \cite{Soga2}, where the discretized equation of (\ref{CL}) with  the Lax-Friedrichs scheme is converted to a discretized equation of (\ref{HJ}), and stability and convergence are proved within an arbitrary time interval. Furthermore, this framework guarantees approximation of all of entropy solutions, viscosity solutions and their characteristic curves at the same time. Application of these results to weak KAM theory is found in  \cite{Soga3}, \cite{Soga4}.  
The key point of the stochastic and variational approach is to characterize the so-called {\it numerical viscosity} of the discretized equation by space-time inhomogeneous random walks. Then, we obtain a stochastic Lax-Oleinik type operator with random walks for the discretized Hamilton-Jacobi equation. Convergence of the stochastic Lax-Oleinik type operator to the exact one for (\ref{HJ}) is proved through the law of large numbers for  random walks. The idea is reminiscent of the stochastic and variational approach to the vanishing viscosity method with the Brownian motions \cite{Fleming}. 

The purpose of this paper is to generalize the results in \cite{Soga2} to the higher dimensional problems. Here, we do not restrict ourselves to the case with Tonelli Hamiltonians, also because such a non-compact problem arises in different contexts.  
We introduce a simple (deterministic) scheme  that is a direct generalization of the one-dimensional Lax-Friedrichs scheme on a staggered grid. Then, we formulate a stochastic Lax-Oleinik type operator with space-time inhomogeneous random walks on a grid in $\R^d$.  Convergence of approximation is proved through the law of large numbers for random walks. {\it We will have convergent approximation of the viscosity solutions, their spatial derivatives and the characteristic curves at the same time within each arbitrarily time interval}. In the case of $d\ge 2$, there is no equivalence between Hamilton-Jacobi equations and scaler conservation laws. Therefore, we discretize the Hamilton-Jacobi equation so that  both of difference viscosity solutions and their  discrete spatial derivatives can be controllable. 
Our approach yields new results on stability of the scheme and its convergence to exact viscosity solutions with the rate $O(\sqrt{\dx})$, from which convergent approximation of  derivatives and characteristic curves is derived. Here, $\dx,\dt>0$ are spatial, temporal discretization parameters respectively. In our proof, this rate of convergence comes from the hyperbolic scaling limit (i.e., $\dx,\dt\to0$ with $\dt/\dx=O(1)$) of random walks.   The result of the present paper would be a basic tool for numerical analysis of Hamilton-Jacobi equations including weak KAM theory in the higher dimensional setting.  

 There are many results on finite difference approximation of the viscosity solution to (\ref{HJ}). We refer to the pioneering work \cite{Crandall-Lions} and its generalization \cite{Souganidis}, where convergence of a class of finite difference schemes with the rate $O(\sqrt{\dx})$ is proved in an abstract setting, under the assumption that schemes are monotone (verification of this assumption is necessary for each scheme). In the recent work \cite{BFZ}, a numerical scheme for Hamilton-Jacobi equations with separable Hamiltonians, i.e., functions $H$ of  the form $H(x,t,p)=f(p)+g(x,t)$, is developed based on a direct discretization of Lax-Oleinik type operators.  As far as the author knows in literature,  the functions $H$ with (H1)--(H5) are not covered; furthermore, convergence of approximation is proved only for viscosity solutions. The main difficulty of finite difference approximation in (\ref{HJ}) with our requirement  is to verify a priori boundedness of the discrete derivatives of difference solutions (this yields  monotonicity of the scheme). Verification of  the a priori boundedness is harder even in  one-dimensional cases, if $H(x,t,p)$ is convex and superlinear with respect to $p$ and is not of a separable form like $H(x,t,p)=f(p )+g(x,t)$.  
This difficulty is overcome by means of our stochastic and variational framework, where we may effectively use a priori boundedness of minimizers of the stochastic  Lax-Oleinik type operator.  In our framework, hyperbolic scaling $\dt/\dx=O(1)$ is crucial. It is interesting to  remark that  the scheme developed in \cite{BFZ} accepts $\dx,\dt\to0$ with $\dx/\dt\to0$, say $\dt=\sqrt{\dx}$, which is not hyperbolic scaling nor diffusive scaling. If one only needs approximation of viscosity solutions with a separable Hamiltonian, the scheme in \cite{BFZ} would be quicker than ours in actual computation. 
\medskip

\noindent {\bf Acknowledgement.}  The main part of this work was done, when the author belonged to Unit\'e de math\'ematiques pures et appliqu\'ees, CNRS UMR 5669  \&  \'Ecole Normale Sup\'erieure de Lyon, being supported by ANR-12-BS01-0020 WKBHJ as a researcher for academic year 2014-2015, hosted by Albert Fathi. 
The author is partially supported by JSPS Grant-in-aid for Young Scientists (B) \#15K21369.
\setcounter{section}{1}
\setcounter{equation}{0}
\section{Scheme and result}

Let $\dx>0$ and $\dt>0$ be discretization parameters for space and time respectively. Set $\Delta:=(\dx,\dt)$, $|\Delta|:=\max\{\dx,\dt \}$, $G_{even}:=\{m\dx \,|\,m=(m^1,\ldots,m^d)\in\Z^d,m^1+\cdots+ m^d=even\}$,  $G_{odd}:=\{m\dx \,|\,m\in\Z^d,m^1+\cdots+m^d=odd\}$, $t_k:=k\dt$ for $k\in \N\cup\{0\}$ and
\begin{eqnarray*} 
&&\mathcal{G}:=\bigcup_{k\ge0}\Big\{(G_{even}\times\{t_{2k}\})\cup(G_{odd}^d\times\{t_{2k+1}\})\Big\},\\
&&\tG:=\bigcup_{k\ge0}\Big\{(G_{odd}\times\{t_{2k}\})\cup(G_{even}\times\{t_{2k+1}\})\Big\}.
\end{eqnarray*}
Note that $\G\cup\tG=(\dx\Z)^d\times(\dt\Z_{\ge0})$, where $\dx \Z:=\{i\dx\,|\,i\in\Z\}$. Each point of $(\dx\Z)^d\times(\dt\Z_{\ge0})$ is denoted by $(x_m,t_k)=(x^1_{m^1},\ldots,x^d_{m^d},t_k)$ with $m=(m^1,\ldots,m^d)\in\Z^d$. We sometimes use the notation $(x_{m},t_k),(x_{m+\1},t_{k+1})$ to indicate points of $\G$ and $(x_{m+\1},t_k),(x_{m},t_{k+1})$ to indicate points of $\tG$ with $\1:=(1,0,\ldots,0)\in\Z^d$.  For $(x,t)\in \G\cup \tG$, the notation $m(x)$, $k(t)$ denotes the index of $x$, $t$ respectively. For $t\ge0$, $k(t)$ denotes the integer such that $t\in[k(t)\dt,k(t)\dt+\dt)$.  
Let $\{e_1,\ldots,e_d\}$ be the standard basis of $\R^d$. Set 
$$B:=\{\pm e_1,\ldots,\pm e_d\}.$$

Let $v=v^k_{m+\1}$ denote a function: $\tG\ni(x_{m+\1},t_k)\mapsto v^k_{m+\1}\in\R$. Introduce the spatial difference derivatives of $v$ that are defined at each point $(x_m,t_k)\in\G$ as 
\begin{eqnarray*}
(D_{x^j}v)^k_{m}:=\frac{v^k_{m+e_j}-v^k_{m-e_j}}{2\dx},\quad
(D_{x}v)^k_{m}:=\big((D_{x^1}v)^k_{m},\ldots,(D_{x^d}v)^k_{m}\big).
\end{eqnarray*}
Introduce the temporal difference derivative of $v$ as 
$$(D_t v)^{k+1}_m:=\left( v^{k+1}_{m}-\frac{1}{2d}\sum_{\omega\in B}v^k_{m+\omega} \right)\frac{1}{\dt}.$$
Discretize  (\ref{HJ}) as 
\begin{eqnarray}\label{HJ-Delta}
\left\{
\begin{array}{lll}
&(D_tv)^{k+1}_m+H(x_m,t_k,(D_xv)^k_m)=h\mbox{\quad in $\tG|_{0\le k\le k(T)}$},
\medskip\\
&v^0_{m+\1}\mbox{\quad  is given on $G_{odd}$},
\end{array}
\right.
\end{eqnarray}
where the initial data $v^0_{m+\1}$ is one of the following:
\begin{eqnarray}\label{initial-1}
v^0_{m+\1}&:=&v^0(x_{m+\1}),\\\label{initial-2}
v^0_{m+\1}&:=&\frac{1}{(2\dx)^{d}} \int_{[-\dx,\dx]^d} v^0(x_{m+\1}+y)dy.
\end{eqnarray} 
Note that,  in (\ref{HJ-Delta})$, v^{k+1}_{m}$ is unknown determined by $v^k_{m+\1}$ as a recursion. If $d=1$,  the difference equation in (\ref{HJ-Delta}) is exactly the same as the scheme studied in \cite{Soga2}. To prove convergence of $(D_xv)^k_m$ without assuming semiconcavity of exact initial data $v^0$, we need \eqref{initial-2}.  If we do not refer to the form of discrete initial data in an assertion, both  \eqref{initial-2} and \eqref{initial-2} are fine.

We introduce space-time inhomogeneous random walks on $\tG$, which correspond to characteristic curves of (\ref{HJ}). 
For each point  $(x_n,t_{l+1})\in\tG$, we consider the backward random walks $\gamma$ within $[t_{l'},t_{l+1}]$ which start from $x_n$ at $t_{l+1}$ and move by $\omega\dx$, $\omega\in B$ in each backward time step $\dt$: 
$$\gamma=\{\gamma^k\}_{k=l',\cdots,l+1},\quad\gamma^{l+1}=x_{n},\quad \gamma^{k}=\gamma^{k+1}+\omega\dx.$$
More precisely, we set the following objects for each $(x_n,t_{l+1})\in\tG$ and $l'\le l$: 
\begin{eqnarray*}
&&X_n^{l+1,k}:=\{ x_{m+\1} \,|\, \mbox{ $(x_{m+\1},t_{k})\in\tG$, $\max_{1\le j\le d}|x^j_{m^j+\1^j}-x^j_{n^j}|\le(l+1-k)\dx$}\}\\
 &&\mbox{(the set of reachable points at time $k$)},\\
&&G_n^{l+1,l'}:=\bigcup_{l'\le k\le l+1}\big(X_n^{l+1,k}\times\{t_{k}\}\big)\subset\tG, \\
&&\xi:G_n^{l+1,l'+1}\ni(x_m,t_{k+1})\mapsto\xi^{k+1}_m\in[-(d\lambda)^{-1},(d\lambda)^{-1}]^d,\quad \lambda:=\dt/\dx, \\
&&\rho: G_n^{l+1,l'+1}\times B \ni(x_m,t_{k+1};\omega)\mapsto\rho^{k+1}_m(\omega):=\Big\{\frac{1}{2d}-\frac{\lambda}{2}(\omega\cdot\xi^{k+1}_m)\Big\}\in[0,1],\\
&&\gamma:\{ l',l'+1,\ldots,l+1\}\ni k\mapsto \gamma^k\in X_n^{l+1,k},\mbox{ $\gamma^{l+1}=x_n$, $\gamma^{k}=\gamma^{k+1}+\omega\dx$, $\omega\in B$},\\
&&\Omega_n^{l+1,l'}:\mbox{ the family of the above $\gamma$}, 
\end{eqnarray*}
where $\xi$ and $\rho$ are not defined at $l'$. We may regard $\rho^{k+1}_m(\omega)$, $\omega\in B$ as the transition probability from $(x_m,t_{k+1})$ to $(x_m+\omega\dx,t_k)$, since we have 
$$\sum_{\omega\in B}\rho^{k+1}_m(\omega)=\sum_{i=1}^d(\rho^{k+1}_m(e_i)+\rho^{k+1}_m(-e_i))=1.$$
We control transition of random walks by $\xi$, which is a kind of velocity field on $G_n^{l+1,l'}$.     
We define the density of each path $\gamma\in\Omega_n^{l+1,l'}$ as 
$$\mu_n^{l+1,l'}(\gamma):=\prod_{l'\le k\le l}\rho^{k+1}_{m(\gamma^{k+1})}(\omega^{k+1}),$$    
where $\omega^{k+1}:=(\gamma^k-\gamma^{k+1})/\dx$. For each $\xi$, the density $\mu_n^{l+1,l'}(\cdot)=\mu_n^{l+1,l'}(\cdot;\xi)$ yields a probability measure of $\Omega_n^{l+1,l'}$, namely, 
$$prob(A)=\sum_{\gamma\in A}\mu_n^{l+1,l'}(\gamma;\xi)\mbox{\quad for $A\subset\Omega_n^{l+1,l'}$}. $$
The expectation with respect to this probability measure is denoted by $E_{\mu_n^{l+1,l'}(\cdot;\xi)}[\cdot]$, namely, for a random variable $f:\Omega_n^{l+1,l'}\to\R$,
$$E_{\mu_n^{l+1,l'}(\cdot;\xi)}[f(\gamma)]:=\sum_{\gamma\in\Omega_n^{l+1,l'}}\mu_n^{l+1,l'}(\gamma;\xi)f(\gamma).$$
We remark that, since our transition probabilities are space-time inhomogeneous, the well-known law of large numbers and central limit theorem for random walks do not always hold. The author investigated the asymptotics  of the probability measure of $\Omega_n^{l+1,l'}$ as $\Delta\to0$ under hyperbolic scaling in the one-dimensional case \cite{Soga1}. We extend this investigation to the current multi-dimensional case.    

Let $v(x,t)$ be the viscosity solution of (\ref{HJ}) ($v$ uniquely exists as a Lipschitz function). Then, $v$ satisfies for each $x\in\R^d$ and $t>0$,
$$v(x,t)=\inf_{\gamma\in AC,\,\,\gamma(t)=x}\left\{ \int^t_0 L(\gamma(s),s,\gamma'(s))ds+v^0(\gamma(0)) \right\}+ht,
$$
where $AC$ is the family of absolutely continuous curves $\gamma:[0,t]\to\R^d$ (a Lax-Oleinik type operator for (\ref{HJ})). Due to Tonelli's theory, there exists a minimizing curve $\gamma^\ast$ for $v(x,t)$, which is a $C^2$-solution of the Euler-Lagrange equation generated by $L$. We say that a point $(x,t)\in\R^d\times\R$ is regular, if $v_x(x,t)$ exists. The viscosity solution $v$ of (\ref{HJ}) is Lipschitz and hence it is differentiable a.e. If $(x,t)$ is regular, the minimizing curve $\gamma^\ast$ for $v(x,t)$ is unique. Let $(x,t)$ be regular and let $\gamma^\ast$ be the minimizing curve for $v(x,t)$. Then, we have 
$$v_x(x,t)=\int^t_0L_x(\gamma^\ast(s),s,\gamma^\ast{}'(s))ds+v^0_x(\gamma^\ast(0)),$$ 
where $v^0$ is supposed to be semiconcave; otherwise, $v^0_x(\gamma^\ast(0))$ must be replaced by $L_\xi(\gamma^\ast(0),0,\gamma^\ast{}'(0))$. We refer to \cite{Cannarsa}, \cite{Fathi-book} for more on viscosity solutions and calculus of variations. 
 
Now we state the main results. The first theorem shows a stochastic and variational representation of the difference solution to (\ref{HJ-Delta}) (a Lax-Oleinik type operator for (\ref{HJ-Delta})).
\begin{Thm}\label{main1}
There exists $\lambda_1>0$ (depending on $T$, $r$ and $R$, but independent of $\Delta$) for which the following statements hold for any small $\Delta=(\dx,\dt)$ with $\lambda=\dt/\dx<\lambda_1$:
\begin{enumerate} 
\item[(1)] For each $n$ and $l$ with $0<l+1\le k(T)$ such that $(x_n,t_{l+1})\in\tG$, the expectation 
$$E^{l+1}_n(\xi):=E_{\mu_n^{l+1,0}(\cdot;\xi)}\Big[ \sum_{0<k\le l+1}L(\gamma^k,t_{k-1},\xi^k_{m(\gamma^k)})\dt+v_{m(\gamma^0)}^0 \Big]+ht_{l+1}$$
with respect to the probability measure of $\Omega_n^{l+1,0}$ has the infimum within all controls $\xi:G_n^{l+1,1}\to[-(d\lambda)^{-1},(d\lambda^{-1})]^d$. There exists the unique minimizing control $\xi^\ast$ for the infimum, which satisfies $| \xi^\ast{}^j|\le (d\lambda_1)^{-1}<(d\lambda)^{-1}$ for all $1\le j\le d$.
\item[(2)]Define the function $v$ on $\tG|_{0\le k\le k(T)}$ as $v(x_m,t_{k+1}):=\inf_\xi E^{k+1}_m(\xi)$ and $v(x_{m+\1},t_0):=v_{m+\1}^0$. Then, the minimizing control $\xi^\ast$ for $\inf_\xi E^{l+1}_n(\xi)$ satisfies
$$\xi^\ast{}^{k+1}_m=H_p(x_m,t_k,(D_xv)^k_m)\,\,\,\,(\Leftrightarrow\,\,(D_xv)^k_m=L_\xi(x_m,t_k,\xi^\ast{}^{k+1}_m)).$$
In particular, $(D_xv)^k_m$ is uniformly bounded on $\tG|_{0\le k\le k(T)}$ independently from $\Delta$ (a CFL-type condition).
\item[(3)] $v=v^k_{m+\1}$ defined in (2) is the solution of (\ref{HJ-Delta}).
\item[(4)] For each  $\omega\in B$,  let $\xi^\ast(\omega):G_{n+\1+\omega}^{l+1,1}\to[-(d\lambda)^{-1},(d\lambda^{-1})]^d$ be the minimizing control for $\inf_\xi E^{l+1}_{n+\1+\omega}(\xi)$. Let $\gamma\in \Omega_{n+\1+\omega}^{l+1,0}$ be the minimizing random walk generated by $\xi^\ast(\omega)$. Then, we have for $j=1,\ldots,d$,
\begin{eqnarray*}  
(D_{x^j}v)^{l+1}_{n+\1}&\le& E_{\mu_{n+\1-e_j}^{l+1,0}(\cdot;\xi^\ast(-e_j))}\Big[  \sum_{0<k\le l+1} L_{x^i}(\gamma^k,t_{k-1},\xi^k_{m(\gamma^{k})}(-e_j))\dt\\
&&+ (D_{x^j}v)^{0}_{m(\gamma^0)+e_j}\Big]+\theta \dx,\\
(D_{x^j}v)^{l+1}_{n+\1}&\ge&E_{\mu_{n+\1+e_j}^{l+1,0}(\cdot;\xi^\ast(e_j))}\Big[  \sum_{0<k\le l+1} L_{x^i}(\gamma^k,t_{k-1},\xi^k_{m(\gamma^{k})}(e_j)) \dt\\
&&+ (D_{x^j}v)^{0}_{m(\gamma^0)-e_j}\Big]-\theta \dx,
\end{eqnarray*}
where $\theta>0$ is independent of $\Delta$.
\end{enumerate}
\end{Thm}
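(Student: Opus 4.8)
The plan is to establish the four assertions by converting the discrete Hamilton-Jacobi recursion into a controlled optimization over random walks, exploiting the convexity of $H$ and the Legendre duality with $L$. I would first address (1) and (2) together, since the optimality condition characterizing $\xi^\ast$ is the mechanism that forces the a priori bound. The idea is to expand $E^{l+1}_n(\xi)$ using the tower property of conditional expectation with respect to the first backward step: conditioning on the transition $\omega^{l+1}$ out of $(x_n,t_{l+1})$ decomposes the expectation into a sum over $\omega\in B$ of the transition probabilities $\rho^{l+1}_n(\omega)$ times the tail expectation starting from $(x_n+\omega\dx,t_l)$. Because $\rho^{l+1}_n(\omega)=\frac{1}{2d}-\frac{\lambda}{2}(\omega\cdot\xi^{l+1}_n)$ is affine in the control $\xi^{l+1}_n$ and the running cost contributes $L(x_n,t_l,\xi^{l+1}_n)\dt$, the first-step dependence of $E^{l+1}_n(\xi)$ on $\xi^{l+1}_n$ is precisely $L(x_n,t_l,\xi^{l+1}_n)\dt-\frac{\lambda\dx}{2}\sum_{\omega\in B}(\omega\cdot\xi^{l+1}_n)\,v(x_n+\omega\dx,t_l)$, which after collecting terms reduces to minimizing $L(x_n,t_l,\xi)-\xi\cdot(D_xv)^l_n$ over $\xi$. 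By the superlinearity (H3) this is a strictly convex coercive problem with a unique minimizer, and the first-order condition is exactly $(D_xv)^l_n=L_\xi(x_n,t_l,\xi^\ast)$, equivalently $\xi^\ast=H_p(x_n,t_l,(D_xv)^l_n)$; this is the content of (2).

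The heart of the matter, and what I expect to be the main obstacle, is proving the uniform bound $|\xi^\ast{}^j|\le(d\lambda_1)^{-1}$ claimed in (1), since this is simultaneously the CFL admissibility of the minimizer and the a priori boundedness of $(D_xv)^l_n$ that the introduction flags as the crux of the whole paper. My plan is an inductive argument on $k$: assuming $(D_xv)^k_m$ is bounded by some constant $\kappa$ (depending only on $T,r,R$) for all times up to $t_l$, I would show the bound propagates to $t_{l+1}$. The relation $\xi^\ast=H_p(x,t,(D_xv)^k_m)$ together with the uniform bounds (H4) on $H_p$ over compact $p$-sets converts a bound on $(D_xv)^k_m$ into a bound on $\xi^\ast$, but the coupling runs the other way too: controlling $(D_xv)^{l+1}_n$ requires estimating differences of $v$ at adjacent spatial nodes, and this is where assertion (4) enters. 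The strategy is to compare the optimal random walk from $(x_{n+\1+e_j},t_{l+1})$ against a shifted competitor built from the optimizer at $(x_{n+\1-e_j},t_{l+1})$; the difference of the two expectations telescopes the running Lagrangian cost into $\sum_k L_{x^j}\dt$ plus a boundary term in the discrete initial derivative, and hypothesis (H5), $|L_{x^j}|\le\alpha(1+|L|)$, is what keeps this accumulated $x$-derivative cost controlled by a Gronwall-type estimate over the $O(T/\dt)$ steps. Choosing $\lambda_1$ small enough that the resulting $\kappa$ satisfies $\kappa<(d\lambda_1)^{-1}$ closes the induction and yields the strict inequality.

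For assertion (3), once (2) is in hand the verification is essentially a reversal of the dynamic programming expansion: substituting the optimal control into the one-step decomposition gives $v^{l+1}_n=\frac{1}{2d}\sum_{\omega\in B}v(x_n+\omega\dx,t_l)+\inf_\xi\{L(x_n,t_l,\xi)-\xi\cdot(D_xv)^l_n\}\dt+h\dt$, and the infimum equals $-H(x_n,t_l,(D_xv)^l_n)$ by the Legendre duality $L(x,t,\xi)=\sup_p\{p\cdot\xi-H(x,t,p)\}$ evaluated at the optimal $p=(D_xv)^l_n$. Rearranging and dividing by $\dt$ reproduces exactly the discrete equation $(D_tv)^{l+1}_n+H(x_n,t_l,(D_xv)^l_n)=h$ in \eqref{HJ-Delta}, so the variational $v$ solves the scheme. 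Assertion (4) I would then prove in full detail using the comparison-of-competitors construction sketched above: the two displayed inequalities are the two one-sided estimates obtained by choosing, as the suboptimal competitor for the node $x_{n+\1}$, the shifted optimizer anchored at $x_{n+\1\mp e_j}$, with the error term $\theta\dx$ absorbing the discrepancy between the discrete running cost sampled along the shifted walk and the true optimal cost; the constant $\theta$ is controlled by the bounds in (H4) on the second derivatives $H_{x^ix^j},H_{x^ip^j}$ uniformly over the compact $p$-range secured in part (2), and is therefore independent of $\Delta$.
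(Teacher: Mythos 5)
Your overall architecture coincides with the paper's: the one-step dynamic-programming decomposition, Legendre duality giving $\xi^\ast{}^{k+1}_m=H_p(x_m,t_k,(D_xv)^k_m)$, induction in time, and the shifted-competitor argument that turns $v^{l+1}_{n+\1+e_j}-v^{l+1}_{n+\1-e_j}$ into $\sum_k L_{x^j}\dt$ plus an initial-derivative term and an $O(\dx)$ Taylor error (this is exactly how the paper obtains (4)). But there is a genuine gap at the one step on which the whole theorem hinges: how the accumulated cost $E\big[\sum_k|L|\dt\big]$ is bounded after invoking (H5). You call this ``a Gronwall-type estimate over the $O(T/\dt)$ steps,'' and a time-iteration cannot close here. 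Along the optimal control, $|L(\gamma^k,t_{k-1},\xi^{\ast}{}^k)|$ is controlled only through the derivative bound at step $k$, via $L(x,t,H_p(x,t,p))=p\cdot H_p(x,t,p)-H(x,t,p)$, which grows superlinearly in $\|p\|$ (quadratically for quadratic $H$); the resulting recursion for the derivative bound behaves like $\dot\kappa=\alpha\,g(\kappa)$ with superlinear $g$ and blows up in finite time. That is precisely the Oleinik-type restricted-time-interval result that this paper is designed to remove. The alternative reading---bounding $|L|$ by its maximum over the admissible control box $[-(d\lambda)^{-1},(d\lambda)^{-1}]^d$---makes $\kappa$ depend on $\lambda_1$, so your closing step ``choose $\lambda_1$ small enough that $\kappa<(d\lambda_1)^{-1}$'' is circular: shrinking $\lambda_1$ enlarges the control box and hence $\kappa$.

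The mechanism the paper uses, and which your proposal never states, is minimality of the control against a \emph{fixed} competitor: since $\xi\equiv0$ is admissible, the variational characterization gives $E_{\mu(\cdot;\xi^\ast(-e_j))}\big[\sum_k L\dt+v^0_{m(\gamma^0)}\big]\le E_{\mu(\cdot;0)}\big[\sum_k L(\gamma^k,t_{k-1},0)\dt+v^0_{m(\gamma^0)}\big]\le T\max_{x,t}|L(x,t,0)|+R=\alpha_1$, a constant depending only on $T$ and $R$. Adding and subtracting $v^0_{m(\gamma^0)}$ inside the expectation (cost $\alpha R$) and using $|L|\le L+2|L_\ast|$ then converts the (H5) estimate into $\|(D_xv)^{l+1}\|_\infty\le 1+r+\alpha_2$ with $\alpha_2$ independent of $\lambda_1$ and $\Delta$; only after this bound is in hand is $\lambda_1$ \emph{defined} by $\lambda_1:=(d\max_{\|p\|_\infty\le 1+r+\alpha_2}\|H_p\|_\infty)^{-1}$, so the control bound $\|\xi^\ast\|_\infty\le(d\lambda_1)^{-1}$ follows by definition rather than by tuning a smallness condition. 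A secondary point: uniqueness of the full minimizing control does not follow from strict convexity of the one-step problem alone; one needs the inductive hypothesis that the tail minimizers are unique, together with strict positivity of the optimal transition probabilities, which holds precisely because $\|\xi^\ast\|_\infty\le(d\lambda_1)^{-1}<(d\lambda)^{-1}$.
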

We define the interpolation $v_\Delta(x,t):\R^d\times[0,t_{k(T)+1})\to\R$ of the difference solution $v^k_{m+\1}$ to  (\ref{HJ-Delta}), $u^j_\Delta(x,t):\R^d\times[0,t_{k(T)+1})\to\R^d$  of its difference partial derivative $(D_{x^j}v)^k_m$ and  $\gamma_\Delta(s):[0,t]\to\R^d$ of each sample path $\gamma\in \Omega_n^{l+1,0}$ with $t\in[t_{l+1},t_{l+2})$:
\begin{eqnarray*}
&&v_\Delta(x,t):=v^k_{m+\1}\\
&&\qquad \mbox{\quad for $x\in[x^1_{m^1+\1^1-1},x^1_{m^1+\1^1+1})\times\cdots\times[x^d_{m^d+\1^d-1},x^d_{m^d+1^d+1})$, $t\in[t_k,t_{k+1})$},\\
&&u_\Delta^j(x,t):=(D_{x^j}v)_{m}^k\\
&&\qquad  \mbox{\quad for $x\in[x^1_{m^1-1},x^1_{m^1+1})\times\cdots\times[x^d_{m^d-1},x^d_{m^d+1})$, $t\in[t_k,t_{k+1})$},\\
&&\gamma_\Delta(s):=
\left\{
\begin{array}{lll}
&x_n \mbox{\quad for  $s\in[t_{l+1},t]$},
\medskip\\
&\gamma^k+\frac{\gamma^{k+1}-\gamma^k}{\dt}(s-t_k) \mbox{\quad for  $s\in[t_{k},t_{k+1}]$}.
\end{array}
\right.
\end{eqnarray*}
The next theorem shows  convergence of approximation. 
\begin{Thm}\label{main2}
Take the limit $\Delta\to0$ under hyperbolic scaling, namely, $\Delta\to0$ with $0<\lambda_0\le\lambda=\dt/\dx<\lambda_1$, where $\lambda_1$ is the one mentioned in  Theorem \ref{main1} and $\lambda_0$ is a constant. Then, the following statements hold:
\begin{enumerate}
\item[(1)] Let $v$ be the viscosity solution of (\ref{HJ}) and let Let $v_\Delta$ be the interpolation of the  solution of (\ref{HJ-Delta}). 
Then, there exists $\beta>0$ independent of $\Delta$ and $v^0$ for which we have  
$$\sup_{\R^d\times[0,T]}|v_\Delta-v|\le \beta \sqrt{\dx} \mbox{ \,\,\,\,\,\,as $\Delta\to0$}.$$
\item[(2)] Let $(x,t)\in\R^d\times(0,T]$ be  regular and $\gamma^\ast:[0,t]\to\R^d$ be the minimizing curve for $v(x,t)$. Let $(x_n,t_{l+1})\in\tG|_{0\le k\le k(T)}$ be a point such that $t\in[t_{l+1},t_{l+2})$ and $x\in[x^1_{n^1-1},x^1_{n^1+1})\times\cdots\times[x^d_{n^d-1},x^d_{n^d+1})$. Let $\gamma_\Delta$ be the  interpolation  of the random walk $\gamma\in \Omega_n^{l+1,0}$ generated by the minimizing control for  $v^{l+1}_n$. Then, we have 
$$\gamma_\Delta\to\gamma^\ast\mbox{\quad uniformly on $[0,t]$ in probability \mbox{ \,\,\,\,\,\,as $\Delta\to0$}.}$$
In particular, the average of $\gamma_\Delta$ converges uniformly to $\gamma^\ast$ on $[0,t]$.
\item[(3)] 
Suppose that $v^0$ is semiconcave with a linear modulus. 
Let $u^j_\Delta$ be the interpolation  of the difference partial derivatives of the solution to (\ref{HJ-Delta}).  Then, for each regular point $(x,t)\in\R^d\times(0,T]$, we have  
 $$u_\Delta^j(x,t)\to v_{x^j}(x,t) \mbox{ \,\,\,\,\,\,as $\Delta\to0$ \quad($j=1,\ldots,d$)}.$$
In particular, $u^j_\Delta$ converges to $v_{x^j}$ pointwise a.e., and hence, for each compact set $K\subset\R^d$, $u^j_\Delta(\cdot,t)$ converges to $v_{x^j}(\cdot,t)$ in $L^1(K)$; $u^j_\Delta$ converges uniformly to $v_{x^j}$ on $(K\times[0,T])\setminus\Theta$, where $\Theta$ is a neighborhood of the set of points of singularity of $v_x$  with arbitrarily small measure.   
\item[(4)] 
Take the form \eqref{initial-2} of initial data, where $v^0$ is NOT supposed to be  semiconcave. 
Let $u^j_\Delta$ be the interpolation  of the difference partial derivatives of the solution to (\ref{HJ-Delta}).  Then, for each compact set $K\subset \R^d$ and each $t\in[0,T]$, we have  
 $$u_\Delta^j(\cdot,t)\to v_{x^j}(\cdot,t) \mbox{ in $L^1(K)$ \,\,\,\,\,\,as $\Delta\to0$ \quad($j=1,\ldots,d$)}.$$
\end{enumerate}
\end{Thm}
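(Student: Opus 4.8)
The plan is to derive all four statements from one comparison: the discrete stochastic Lax-Oleinik representation of Theorem \ref{main1} against the exact Lax-Oleinik formula for $v$, with the law of large numbers for the inhomogeneous random walks under hyperbolic scaling as the common engine. Throughout fix $(x,t)$ with $t\in[t_{l+1},t_{l+2})$ and the nearest $(x_n,t_{l+1})\in\tG$, so $\|x-x_n\|\le\dx$.

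\textbf{Part (1).} For the upper bound $v_\Delta\le v+\beta\sqrt{\dx}$, take the Tonelli minimizer $\gamma^\ast$ for $v(x,t)$ and build an admissible control $\xi$ whose drift tracks $\gamma^\ast{}'$, e.g. $\xi^{k+1}_m:=\gamma^\ast{}'(t_k)$ truncated to the admissible box. The mean of the resulting walk follows $\gamma^\ast$ up to discretization error, and the law of large numbers under hyperbolic scaling gives $E[\sum_k L\dt]=\int_0^t L(\gamma^\ast,s,\gamma^\ast{}')\,ds+O(\sqrt{\dx})$; here the $\sqrt{\dx}$ comes from the $O(\sqrt{\dt})$ per-step fluctuation accumulated over $O(1/\dt)$ steps together with the $C^2$ bounds (H4) on $L$. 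For the lower bound $v_\Delta\ge v-\beta\sqrt{\dx}$, note that every sample path of the discrete minimizer, interpolated as $\gamma_\Delta$, is an admissible absolutely continuous curve ending within $\dx$ of $x$, so $\int_0^t L(\gamma_\Delta,\cdot,\gamma_\Delta')\,ds+v^0(\gamma_\Delta(0))\ge v(x,t)-ht-O(\dx)$; taking expectations and using the law of large numbers to replace $\sum_k L\dt$ by the integral up to $O(\sqrt{\dx})$ closes the estimate. Uniformity in $(x,t)$ and in $v^0$ follows from (H4) and the a priori discrete-gradient bound of Theorem \ref{main1}(2).

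\textbf{Part (2).} At a regular $(x,t)$ the minimizer $\gamma^\ast$ is unique. Writing $\bar\gamma_\Delta(s):=E[\gamma_\Delta(s)]$, part (1) shows $\{\gamma_\Delta\}$ is a minimizing sequence in expectation, so Tonelli compactness and lower semicontinuity force every subsequential limit of $\bar\gamma_\Delta$ to be a continuous minimizer, hence $\bar\gamma_\Delta\to\gamma^\ast$ uniformly by uniqueness. The fluctuation $\gamma_\Delta-\bar\gamma_\Delta$ is a sum of centered increments of size $O(\dx)$ over $O(1/\dt)$ steps, so its variance is $O(\dx)$ and a Doob maximal inequality for the associated martingale gives $\sup_{s}\|\gamma_\Delta(s)-\bar\gamma_\Delta(s)\|=O(\sqrt{\dx})$ in $L^2$, hence $\to0$ uniformly in probability. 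Combining the two yields $\gamma_\Delta\to\gamma^\ast$ uniformly in probability, and since the paths are uniformly bounded the average $\bar\gamma_\Delta$ converges uniformly.

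\textbf{Part (3).} With $v^0$ semiconcave, start from the two-sided bound of Theorem \ref{main1}(4), which sandwiches $(D_{x^j}v)^{l+1}_n$ between expectations of $\sum_k L_{x^j}\dt+(D_{x^j}v)^0$ along the minimizing walks for the neighboring foot points $n+\1\pm e_j$, up to $\pm\theta\dx$. Shifting the foot point by one grid step is an $O(\dx)$ perturbation, and uniqueness at a regular point makes the minimizer stable, so part (2) applies to these neighboring walks and the running term converges to $\int_0^t L_{x^j}(\gamma^\ast,s,\gamma^\ast{}')\,ds$ by the law of large numbers. Semiconcavity with a linear modulus makes $(D_{x^j}v)^0$ a difference quotient that converges to $v^0_{x^j}$ at every differentiability point of $v^0$; since the foot points concentrate near $\gamma^\ast(0)$ and the latter is a.e. a differentiability point, the boundary term converges to $v^0_{x^j}(\gamma^\ast(0))$. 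The squeeze gives $(D_{x^j}v)^{l+1}_n\to v_{x^j}(x,t)$ at every regular point, i.e. pointwise a.e.; the a priori bound of Theorem \ref{main1}(2) supplies domination, hence $L^1(K)$ convergence; on a compact set avoiding a small neighborhood $\Theta$ of the singular set, continuity of $\gamma^\ast(0)$ and of $v_{x^j}$ upgrades pointwise to uniform convergence.

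\textbf{Part (4), and the obstacle.} When $v^0$ is merely Lipschitz, $v^0_{x^j}(\gamma^\ast(0))$ need not exist and the boundary term must be read as $L_\xi(\gamma^\ast(0),0,\gamma^\ast{}'(0))$, so pointwise control of the discrete initial derivative fails and I argue directly in $L^1(K)$. The averaged data \eqref{initial-2} makes $(D_{x^j}v)^0$ exactly a difference quotient of mollified $v^0$, i.e. a standard mollification of $v^0_{x^j}$, which is uniformly bounded and converges to $v^0_{x^j}$ in $L^1_{loc}$. The plan is to integrate the Theorem \ref{main1}(4) bounds over $K$ and transfer this mollifier convergence from the foot points back to $K$ via the backward characteristic map $x\mapsto\gamma^\ast(0)$, whose Jacobian is controlled by Tonelli theory away from singularities, absorbing the singular set by its small measure and the uniform gradient bound. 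The \emph{main obstacle} is precisely this transfer of the boundary term: matching the random, $\Delta$-dependent foot points of the walks to the continuous backward flow so that the $L^1$ mollifier estimate survives, while controlling the non-smoothness of the characteristic map near the singular set of $v_x$ — the $L^1$ (not pointwise) nature of the initial-derivative convergence, together with the absence of a conservation-law structure when $d\ge2$, is what makes part (4) the delicate endpoint.
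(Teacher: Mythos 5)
Your Part (1) lower bound contains the decisive gap, and it propagates into Part (2). The discrete functional evaluates the Lagrangian at the \emph{control} values, $\sum_k L(\gamma^k,t_{k-1},\xi^\ast{}^k_{m(\gamma^k)})\dt$, whereas the interpolated walk has $\|\gamma_\Delta'(s)\|=\dx/\dt=1/\lambda$ at a.e.\ $s$, with direction $\pm e_i$. Consequently $E[\int_0^t L(\gamma_\Delta,s,\gamma_\Delta')\,ds]$ differs from $E[\sum_k L\,\dt]$ by $O(1)$, not $O(\sqrt{\dx})$: take $d=1$, $L=\xi^2/2$, $\xi^\ast\equiv 0$, so the sum vanishes while the integral equals $t/(2\lambda^2)$. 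No law of large numbers repairs this, because the error sits in the velocity slot of $L$ (a Jensen-type gap for the strictly convex $L$), not in position fluctuations. The paper's device, which your proposal lacks, is the auxiliary path $\eta(\gamma)$ driven by the control sampled along the random walk, $\eta^k=\eta^{k+1}-\xi^{k+1}_{m(\gamma^{k+1})}\dt$: its interpolation $\eta_\Delta(\gamma)$ has derivative exactly equal to the control, is admissible in the exact Lax--Oleinik formula, and only the \emph{position} arguments of $L$ and $v^0$ need to be swapped from $\gamma^k$ to $\eta^k(\gamma)$, at cost $O(\sqrt{\dx})$ by Lemma \ref{limit theorem} ($\tilde{\delta}^k_i\le\sqrt{(t_{l+1}-t_k)\dx/\lambda}$). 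Your upper bound essentially survives (there the control depends only on time, so $\eta$ is deterministic, which is what you wrote), but the lower bound must run through $\eta_\Delta(\gamma)$, not $\gamma_\Delta$. In Part (2) you have two further problems: $\gamma_\Delta-\bar{\gamma}_\Delta$ is \emph{not} a martingale, since the drift depends on the random position (it is $\gamma-\eta(\gamma)$ that is a martingale, which is exactly the content of Lemma \ref{limit theorem}); and applying Tonelli compactness and lower semicontinuity to the \emph{averaged} path $\bar{\gamma}_\Delta$ would require $L$ jointly convex in $(x,\xi)$, which is not assumed --- the expected action does not dominate the action of the mean path. The paper instead proves concentration in probability: the expected excess action of $\eta_\Delta(\gamma)$ over the minimum is $O(\sqrt{\dx})$ while the pathwise excess is $\ge-\beta_7\dx$, so a Chebyshev argument gives $prob(\Omega^+)=O(\dx^{1/4})$ for the set where the excess exceeds $\dx^{1/4}$; then stability of the unique minimizer (Lemma \ref{cal}) and the $L^2$-to-$C^0$ conversion (Lemma \ref{convert}) yield uniform convergence in probability.

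Part (3) is broadly aligned with the paper, modulo its dependence on the repaired (1) and (2); note only that semiconcavity plus Lemma \ref{lemma-regularity}(3) give that $v^0$ is differentiable \emph{exactly at} $\gamma^\ast(0)$ (not merely a.e.\ near it), which is what makes the boundary term converge. Part (4), as you candidly admit, is not proved, and the obstacle you name is fatal for your route: near singularities the backward characteristic map compresses sets of positive measure onto the shock set, so no Jacobian control is available and the change-of-variables transfer of the $L^1$ mollifier estimate cannot be carried out. The missing idea is the paper's Proposition \ref{L1-contraction}: a discrete $L^1$-contraction for the vector of difference derivatives, proved by a sign-and-cancellation argument valid under the CFL condition $\lambda<\lambda_1$. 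With mollified data $v^{0\delta}$ one then splits $\|u^j_\Delta(\cdot,t)-v_{x^j}(\cdot,t)\|_{L^1(K)}$ into three terms: $\|u^j_\Delta-u^{\delta}{}^j_\Delta\|$ is bounded at time $t$ by the \emph{initial} $L^1$ distance of discrete derivatives, which the averaged data \eqref{initial-2} identifies with $\|v^{0\delta}_{x^j}-v^0_{x^j}\|_{L^1}$ (this is precisely why \eqref{initial-2} is required); $\|u^{\delta}{}^j_\Delta-v^\delta_{x^j}\|$ is handled by Part (3) applied to the semiconcave $v^{0\delta}$; and $\|v^\delta_{x^j}-v_{x^j}\|$ by stability of minimizers under perturbation of initial data. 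This sidesteps the characteristic map entirely, and no argument in your proposal substitutes for it.
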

\begin{Rem}\label{remark22}
If $d=1$, we do not need the assumption of semiconcavity in (3).    
\end{Rem}
\noindent Although Remark \ref{remark22} is already announced in the work \cite{Soga2} of the author, we refer to more details later.   
\setcounter{section}{2}
\setcounter{equation}{0}
\section{Proof of result}
First, we state some properties of $L(x,t,\xi)$. 
\begin{Lemma}
Let $H(x,t,p)$ satisfy (H1)--(H4) and let $L(x,t,\xi)$ be the Legendre transform of $H(x,t,\cdot)$. Then, $L$ satisfies the following (L1)-(L4): 
\begin{enumerate}
\item[(L1)] $L(x,t,\xi):\R^d\times\R\times\R^d\to\R$, $C^2$,
\item[(L2)] $L_{\xi\xi}(x,t,\xi)$ is positive definite on $\R^d\times\R\times\R^d$,
\item[(L3)] $L$ is uniformly superlinear with respect to $\xi$, namely, for each $a\ge0$ there exists $b_2(a)\in\R$ such that $L(x,t,\xi)\ge a\norm \xi\norm+b_2(a)$  on $\R^d\times\R\times\R^d$,
\item[(L4)] $L, L_{x^i},L_{\xi^i}, L_{x^ix^j},L_{x^i\xi^j},L_{\xi^i\xi^j}$ are uniformly bounded on $\R^d\times\R\times K$ for each compact set $K\subset \R^d$ for $i,j=1,\ldots,d$.
\end{enumerate} 
\end{Lemma}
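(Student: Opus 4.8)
The plan is to reduce every assertion to the analysis of the maximizer in the variational formula $L(x,t,\xi)=\sup_{p}\{p\cdot\xi-H(x,t,p)\}$. Fix $(x,t,\xi)$ and look at $p\mapsto p\cdot\xi-H(x,t,p)$. By (H3) this map is coercive, tending to $-\infty$ as $\|p\|\to\infty$, so a maximizer exists; by (H2) the map is strictly concave, so the maximizer is unique. I denote it $p^\ast=p(x,t,\xi)$, characterized by the stationarity relation $\xi=H_p(x,t,p^\ast)$. Everything in (L1)--(L4) then follows from studying $p^\ast$ as a function of $(x,t,\xi)$.

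For (L1) and the derivative formulas I would apply the implicit function theorem to $H_p(x,t,p)=\xi$: since $H$ is $C^2$ by (H1) and $H_{pp}$ is invertible by (H2), the solution $p^\ast(x,t,\xi)$ is $C^1$ jointly in its arguments. Writing $L=p^\ast\cdot\xi-H(x,t,p^\ast)$ and using the envelope cancellation (the $p$-derivative of the bracket vanishes at $p^\ast$, because $\xi=H_p$) gives $L_\xi=p^\ast$ and $L_x=-H_x(x,t,p^\ast)$; both are $C^1$ as compositions of $C^1$ maps, so $L$ is $C^2$, which is (L1). Differentiating $\xi=H_p(x,t,p^\ast)$ in $\xi$ yields $p^\ast_\xi=[H_{pp}(x,t,p^\ast)]^{-1}$, hence $L_{\xi\xi}=[H_{pp}]^{-1}$, positive definite as the inverse of a positive definite matrix, which is (L2). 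Differentiating the same relations in $x$ gives $L_{x\xi}=-H_{xp}[H_{pp}]^{-1}$ and $L_{xx}=-H_{xx}+H_{xp}[H_{pp}]^{-1}H_{px}$, all evaluated at $p^\ast$.

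For (L3) I would bound $L$ from below by a single test point: for $\xi\ne0$ take $p=a\xi/\|\xi\|$, so that $L(x,t,\xi)\ge a\|\xi\|-\sup_{\|p\|\le a}|H(x,t,p)|$, and the supremum is finite and uniform in $(x,t)$ by (H4); setting $b_2(a)$ to minus that supremum gives uniform superlinearity. For (L4) the decisive preliminary is a uniform a priori bound on the maximizer: for $\xi$ in a compact set $K$, say $\|\xi\|\le M$, the point $p^\ast(x,t,\xi)$ stays in a fixed ball $K'$ independent of $(x,t)$. I obtain this by sandwiching $L$ between $L(x,t,\xi)\ge -H(x,t,0)\ge -\sup_{\R^d\times\R}|H(\cdot,\cdot,0)|$ (finite by (H4)) and, using (H3) with $a=M+1$, the estimate $L(x,t,\xi)\le M\|p^\ast\|-(M+1)\|p^\ast\|-b_1(M+1)$, which forces $\|p^\ast\|$ to be bounded in terms of $M$ and the uniform constants alone. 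Once $p^\ast\in K'$ uniformly, the quantities $L$, $L_\xi=p^\ast$ and $L_x=-H_x(x,t,p^\ast)$ are bounded at once by (H4), being $H$ and its first derivatives evaluated on the compact $p$-set $K'$.

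The \textbf{main obstacle} is the boundedness of the three second-derivative blocks $L_{\xi\xi}$, $L_{x\xi}$, $L_{xx}$, since each contains the factor $[H_{pp}(x,t,p^\ast)]^{-1}$. Controlling these needs not the upper bounds of (H4) but a uniform positive lower bound $H_{pp}\ge cI$ on $\R^d\times\R\times K'$. This is the delicate point: pointwise positive definiteness (H2) does not, by itself, upgrade to a uniform lower bound by compactness, because the base $\R^d\times\R$ is non-compact, and uniform superlinearity controls only the behavior for large $\|p\|$, not the curvature on $K'$. I would therefore extract from the convexity hypothesis (reading (H2) as, or deriving, uniform convexity $H_{pp}\ge cI$ on each compact $p$-set) the bound $\|[H_{pp}]^{-1}\|\le c^{-1}$ on $K'$, which together with the (H4) bounds on $H_{xx},H_{xp}$ at $p^\ast\in K'$ yields the bounds on $L_{\xi\xi},L_{x\xi},L_{xx}$ and completes (L4). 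Securing and correctly invoking this uniform convexity is, I expect, the crux of the whole lemma.
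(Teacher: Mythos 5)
Your proposal follows the paper's proof essentially step for step: existence of the maximizer $p^\ast$ from the coercivity given by (H3), uniqueness and the stationarity relation $\xi=H_p(x,t,p^\ast)$ from (H2), the implicit function theorem giving a $C^1$ map $p^\ast=p(x,t,\xi)$ and with it the formulas $L_\xi=p^\ast$, $L_x=-H_x$, $L_{\xi\xi}=H_{pp}^{-1}$, $L_{x\xi}=-H_{xp}H_{pp}^{-1}$, $L_{xx}=-H_{xx}+H_{xp}H_{pp}^{-1}H_{px}$ (all evaluated at $p^\ast$), hence (L1)--(L2); the test point $p=a\xi/\|\xi\|$ for (L3); and the a priori bound on $p^\ast$ for $\xi$ in a compact set (you get it from the sandwich $-\sup|H(\cdot,\cdot,0)|\le L(x,t,\xi)\le(\|\xi\|-a)\|p^\ast\|-b_1(a)$ with $a=M+1$, the paper gets the same bound by contradiction --- identical content). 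For $L$, $L_{x^i}$, $L_{\xi^i}$ this bound on $p^\ast$ plus (H4) indeed finishes (L4), exactly as in the paper.

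The one point where you go beyond the paper is precisely what you call the main obstacle, and your suspicion is warranted: the paper does not address it. Its proof ends by asserting that boundedness of $p(x,t,\xi)$ on $\R^d\times\R\times K$ ``ends the proof of (L4) with the above expressions of the derivatives of $L$,'' which tacitly uses that $H_{pp}(x,t,p^\ast)^{-1}$ is uniformly bounded, i.e.\ that $H_{pp}\ge cI$ uniformly in $(x,t)$ for $p$ in the compact set where $p^\ast$ lives. As you observe, this does not follow from the pointwise condition (H2) together with the upper bounds (H4), because $\R^d\times\R$ is non-compact; and in fact it cannot be derived at all, so of your two suggested routes (``reading (H2) as, or deriving, uniform convexity'') only the first is viable. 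A concrete witness in $d=1$: $H(x,t,p)=\frac{1}{1+x^2}\,\frac{p^2}{2}+\frac{x^2}{1+x^2}\,\frac{p^4}{4}$ satisfies (H1)--(H4) (it is a convex combination of two uniformly superlinear functions, and $\lambda(x)=\frac{1}{1+x^2}$ has bounded derivatives), yet $p^\ast(x,t,0)=0$ and $L_{\xi\xi}(x,t,0)=H_{pp}(x,t,0)^{-1}=1+x^2$ is unbounded, so the second-derivative part of (L4) genuinely requires reading (H2) as uniform positive definiteness of $H_{pp}$ on $\R^d\times\R\times K$ for each compact $K\subset\R^d$. With that strengthened reading, your argument is complete, and it is the more careful of the two proofs, since it makes explicit a hypothesis the paper uses silently.
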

\begin{proof}
Minor variation of the reasoning in Chapter 1 of \cite{Fathi-book} yields the assertion. For readers' convenience, we give a proof. Fix $(x,t,\xi)\in\R^d\times\R\times\R^d$  arbitrarily. Take $a>\norm\xi\norm$. By (H3), we have   
$\xi\cdot p-H(x,t,p)\le \xi\cdot p-(a\norm p\norm+b_1(a))
\le(\norm\xi\norm-a)\norm p\norm-b_1(a)
\to-\infty$ as $\norm p\norm\to+\infty$. 
Hence, $\sup_{p\in\R^d}\{\xi\cdot p-H(x,t,p)\}$ is achieved in a bounded ball of $\R^d$, namely, for each $\xi$,  there exists $p^\ast\in\R^d$ such that $L(x,t,\xi)=\xi\cdot p^\ast-H(x,t,p^\ast)$ and $H_p(x,t,p^\ast)=\xi$. Since $H_p(x,t,p^\ast)=\xi$ is invertible with respect to $p^\ast$ due to (H2), we have the $C^1$-map $p^\ast=p(x,t,\xi)$ such that $H_p(x,t,p(x,t,\xi))\equiv\xi$.  Therefore, $L$ is of $C^1$. Furthermore, direct computation yields 
\begin{eqnarray*}
L_\xi(x,t,\xi)&=&p(x,t,\xi) \in C^1,\\
L_{\xi\xi}(x,t,\xi)&=&p_\xi(x,t,\xi)=H_{pp}(x,t,p(x,t,\xi))^{-1},\\
L_x(x,t,\xi)&=&-H_x(x,t,p(x,t,\xi))\in C^1,\\
L_{xx}(x,t,\xi)&=&-H_{xx}(x,t,p(x,t,\xi))\\
&&+H_{xp}(x,t,p(x,t,\xi))H_{pp}(x,t,p(x,t,\xi))^{-1}H_{px}(x,t,p(x,t,\xi)),\\
L_{x\xi}(x,t,\xi)&=&-H_{xp}(x,t,p(x,t,\xi))H_{pp}(x,t,p(x,t,\xi))^{-1}.  
\end{eqnarray*}
(L1) and (L2) are proved. 

For each $a\ge0$, define $b_2(a):=-\max_{x\in\R^d,t\in\R,\norm p\norm=a}H(x,t,p)$. We have $L(x,t,\xi)\ge \xi\cdot p-H(x,t,p)$ for all $x,t,\xi,p$. For each $\xi\in\R^d$, take $p:=a \xi/\norm \xi \norm$. Then, we see that $L(x,t,\xi)\ge a\norm \xi \norm +b_2(a)$, which holds for all $x,t,\xi$. (L3) is proved.

Let $K$ be a compact subset of $\R^d$ and let $a>\max_{\xi\in K}\norm \xi\norm$. Then, we have $H(x,t,p)\ge a\norm p\norm+b_1(a)$ for all $x,t,p$. Hence, $\xi\cdot p-H(x,t,p)\le \norm\xi \norm \norm p\norm -( a \norm p\norm+b_1(a) )<-b_1(a)$ for all $\xi\in K$ and $(x,t,p)\in\R^d\times\R\times\R^d$. Therefore, with (L3), we obtain $b_2(a)\le L(x,t,\xi)=\xi\cdot p(x,t,\xi)-H(x,t,p(x,t,\xi))< -b_1(a)$ on $\R^d\times\R\times K$.  Finally, we show the boundedness of $p(x,t,\xi)$ on $\R^d\times\R\times K$, which ends the proof of (L4) with the above expressions of the derivatives of $L$.  Suppose that there exist $(x_j,t_j,\xi_j)\in \R^d\times\R\times K$ for which the norm of $p_j:=p(x_j,t_j,\xi_j)$ goes to $+\infty$ as $j\to \infty$. Then, it holds that  $L(x_j,t_j,\xi_j)= \xi_j\cdot p_j-H(x_j,t_j,p_j)\le \norm  \xi_j\norm \norm p_j\norm -(a\norm p_j\norm+b_1(a))=(\norm  \xi_j\norm -a)\norm p_j\norm-b_1(a)\to-\infty$ as $j\to\infty$.
However, $L$ is bounded below by $b_2(a)$ on $\R^d\times\R\times K$, and we reach  a contradiction.
\end{proof}

\begin{proof}[{\it Proof of Theorem \ref{main1}.}]
We recall and define the following constants: 
\begin{eqnarray*}
&&\sup_{x\in\R^d}|v^0(x)|\le R,\qquad {\rm ess\,}\sup_{x\in\R^d}\norm v^0_{x}(x)\norm_\infty \le r,\\ &&\norm L_{x}(x,t,\xi)\norm_\infty\le \alpha (1+|L(x,t,\xi)|),\qquad L_\ast:=\min \{0,\,\,\inf_{x\in\R^d,t\in\R,\xi\in\R^d} L(x,t,\xi)\},\\
&&\alpha_1:=T\max_{x\in\R^d,t\in\R}|L(x,t,0)|+R,\quad
 \alpha_2:=\alpha\{\alpha_1+R+(1+2L_\ast)T\},\\
&&\lambda_1:=(d\max_{x\in\R^d,t\in\R,\norm p\norm_\infty\le 1+r+\alpha_2}\norm H_{p}(x,t,p)\norm_\infty)^{-1},\\
&& \theta:=T\max_{x\in\R^d,t\in\R,\norm \xi\norm_\infty\le (d\lambda_1)^{-1},i,j}|L_{x^ix^j}(x,t,\xi)|, 
\end{eqnarray*}
where $\norm x\norm_\infty:=\max_{1\le j\le d} |x^j|$ for $x\in\R^d$. Let $\Delta=(\dx,\dt)$ be such that $\dx \theta\le 1$ and $\lambda=\dt/\dx<\lambda_1$. 

Since $L$ and $v^0$ are bounded below, there exists the infimum of $E^{l+1}_n(\xi)$ with respect to $\xi:G_n^{l+1,1}\to[-(d\lambda)^{-1},(d\lambda)^{-1}]^d$ for each $n$ and $0\le l\le k(T)$. Since $G_n^{l+1,1}$ consists of a finite number of points, the compactness of a finite dimensional Euclidean space  implies that there exists a minimizing control  $\xi^\ast$ that attains the infimum. 

We have the equality for each $0<k\le l$,
\begin{eqnarray}\label{dynmprog}
E^{l+1}_{n}(\xi)
&=&\sum_{\gamma\in\Omega^{l+1,k}_n}\mu_n^{l+1,k}(\gamma;\xi|_{G_n^{l+1,k+1}})\Big(\sum_{k<k'\le l+1}L(\gamma^{k'},t_{k'-1},\xi^{k'}_{m(\gamma^{k'})})\dt\\\nonumber
&&+E^k_{m(\gamma^k)}(\xi|_{G_{m(\gamma^k)}^{k,1}})\Big)+h(t_{l+1}-t_k).
\end{eqnarray}
In order to check (\ref{dynmprog}), set $\tilde{\gamma}:=\gamma|_{k\le k'\le l+1}\in G_n^{l+1,k}$ and $\hat{\gamma}:=\gamma|_{0\le k'\le k}\in G_{m(\tilde{\gamma}^k)}^{k,0}$ for each $\gamma\in G_n^{l+1,0}$. Then, we have $\mu^{l+1,0}_n(\gamma)=\mu^{l+1,k}_n(\tilde{\gamma})\mu^{k,0}_{m(\tilde{\gamma}^k)}(\hat{\gamma})$. 
Hence, it follows from the definition of the random walk that  we have for each $0<k\le l$,
\begin{eqnarray*}
E^{l+1}_{n}(\xi)&=& \sum_{\gamma\in \Omega_n^{l+1,0}}\mu_n^{l+1,0}(\gamma;\xi)\Big( \sum_{0<k\le l+1}L(\gamma^k,t_{k-1},\xi^k_{m(\gamma^k)})\dt+v^0_{m(\gamma^0)} \Big)+ht_{l+1}\\\nonumber
&=&  \sum_{\tilde{\gamma}\in G_n^{l+1,k}} \mu_n^{l+1,k}(\tilde{\gamma}; \xi|_{G_n^{l+1,k+1}}) \Big\{    \sum_{\hat{\gamma}\in  {G_{m(\tilde{\gamma}^k)}^{k,0} } } 
 \mu^{k,0}_{m(\tilde{\gamma}^k)}(\hat{\gamma};   \xi|_{G_{m(\tilde{\gamma}^k)}^{k,1}})
  \\ 
&&\times \Big(\sum_{k<k'\le l+1}L(\tilde{\gamma}^{k'},t_{k'-1},\xi^{k'}_{m(\tilde{\gamma}^{k'})})\dt 
+\sum_{0<k'\le k}L(\hat{\gamma}^{k'},t_{k'-1},\xi^{k'}_{m(\hat{\gamma}^{k'})})\dt
 \\  \nonumber
&&+v^0_{m(\hat{\gamma}^0)}\Big ) \Big\} +ht_k+h(t_{l+1}-t_k), \\
&=&  \sum_{\tilde{\gamma}\in G_n^{l+1,k}} \mu_n^{l+1,k}(\tilde{\gamma}; \xi|_{G_n^{l+1,k+1}}) \Big[  \sum_{k<k'\le l+1}L(\tilde{\gamma}^{k'},t_{k'-1},\xi^{k'}_{m(\tilde{\gamma}^{k'})})\dt  \\
&&+\Big\{ \sum_{\hat{\gamma}\in  {G_{m(\tilde{\gamma}^k)}^{k,0} } } 
 \mu^{k,0}_{m(\tilde{\gamma}^k)}(\hat{\gamma};   \xi|_{G_{m(\tilde{\gamma}^k)}^{k,1}})
   \Big(\sum_{0<k'\le k}L(\hat{\gamma}^{k'},t_{k'-1},\xi^{k'}_{m(\hat{\gamma}^{k'})})\dt
\\
&&+ v^0_{m(\hat{\gamma}^0)}\Big)+ht_k \Big\}\Big]  +h(t_{l+1}-t_k),
\end{eqnarray*}
which implies (\ref{dynmprog}).

We observe that for each $n$ and any $\xi$, 
\begin{eqnarray}\label{looooong}
\,\,\,\,E^1_n(\xi)&=&L(x_n,t_0,\xi^1_n)\dt+\sum_{\omega\in B}\rho^1_n(\omega)v^0_{n+\omega}+ht_1\\\nonumber
&=&L(x_n,t_0,\xi^1_n)\dt+\frac{1}{2d}\sum_{\omega\in B} v^0_{n+\omega}
-\frac{\lambda}{2}\sum_{\omega\in B}(\omega\cdot \xi_n^1)v^0_{n+\omega}+h\dt\\\nonumber
&=&L(x_n,t_0,\xi^1_n)\dt+\frac{1}{2d}\sum_{\omega\in B} v^0_{n+\omega}\\\nonumber
&&-\frac{\dt}{2\dx}\sum_{i=1}^d \xi_n^1{}^i(v^0_{n+e_i}    -v^0_{n-e_i})+h\dt\\\nonumber
&=&L(x_n,t_0,\xi^1_n)\dt+\frac{1}{2d}\sum_{\omega\in B} v^0_{n+\omega}-\dt\xi_n^1\cdot(D_xv)^0_n+h\dt\\\nonumber
&=&-( \xi^1_n\cdot (D_{x}v)^0_n -L(x_n,t_0,\xi^1_n))\dt+\frac{1}{2d}\sum_{\omega\in B} v^0_{n+\omega}+h\dt\\\nonumber
&\ge& -H(x_n,t_0,(D_{x}v)^0_n)\dt +\frac{1}{2d}\sum_{\omega\in B} v^0_{n+\omega}+h\dt,
\end{eqnarray}
where the last inequality becomes an equality if and only if $\xi$ is given as 
$$\xi^1_n=H_p(x_n,t_0,(D_{x}v)^0_n).$$
Hence, the minimizing control $\xi^\ast$ for $v^1_n=\inf_\xi E^1_n(\xi)$ satisfies 
\begin{eqnarray}\label{mini1}
\xi^\ast{}^1_n=H_p(x_n,t_0,(D_{x}v)^0_n),\quad \norm \xi^\ast\norm_\infty\le (d\lambda_1)^{-1}
\end{eqnarray}
with  
\begin{eqnarray}\label{eqn1}
 v^1_n&=&-H(x_n,t_0,(D_{x}v)^0_n)\dt +\frac{1}{2d}\sum_{\omega\in B} v^0_{n+\omega}+h\dt\\\nonumber
  \qquad &\Leftrightarrow&  (D_t v)_n^1+H(x_n,t_0,(D_xv)^0_n)=h.
 \end{eqnarray}
\indent We proceed by induction. Suppose the following statement:
\begin{itemize}
\item[ (A)$_l$] For some $l\ge0$, the minimizing control $\xi^\ast$ for $v^{l+1}_n=\inf_\xi E_n^{l+1}(\xi)$ is uniquely obtained for each $n$ as $\xi^\ast{}^{k+1}_{m}=H_p(x_m,t_k,(D_{x}v)^k_m)$ on $G_n^{l+1,1}$ with $\norm \xi^\ast{}^k_{m+\1}\norm_\infty\le (d\lambda_1)^{-1}.$
\end{itemize}
(A)$_{l=0}$ is true.   In order to see that (A)$_{l+1}$ is also true, we first examine the bound of $(D_xv)^{l+1}_{n+\1}$ for each $n$.  
Let $\xi^\ast(\omega)$ denote the unique minimizer for $v^{l+1}_{n+\1+\omega}$, $\omega\in B$. The variational property yields for each $e_j$,
\begin{eqnarray*}
v^{l+1}_{n+\1-e_j}&=& E_{\mu^{l+1,0}_{n+\1-e_j}(\cdot;\xi^\ast(-e_j))}\Big[\sum_{0<k\le l+1}L(\gamma^{k},t_{k-1}, \xi^\ast{}^{k}_{m(\gamma^{k})}(-e_j))\dt +v^0_{m(\gamma^0)}  \Big]+ht_k,\\
v^{l+1}_{n+\1+e_j}&\le& E_{\mu^{l+1,0}_{n+\1-e_j}(\cdot;\xi^\ast(-e_j))}\Big[\sum_{0<k\le l+1}L(\gamma^{k}+e_j\cdot2\dx,t_{k-1}, \xi^\ast{}^{k}_{m(\gamma^{k})}(-e_j))\dt \\
&&+v^0_{m(\gamma^0+e_j\cdot2\dx)}  \Big] +ht_k.
\end{eqnarray*}
Hence, from $v^{l+1}_{n+\1+e_j}-v^{l+1}_{n+\1-e_j}$, we obtain
\begin{eqnarray*}
&&(D_{x^j}v)^{l+1}_{n+\1}\le E_{\mu^{l+1,0}_{n+\1-e_j}(\cdot;\xi^\ast(-e_j))}\Big[\sum_{0<k\le l+1}L_{x^j}(\gamma^{k},t_{k-1}, \xi^\ast{}^{k}_{m(\gamma^{k})}(-e_j))\dt 
+(D_{x^j}v)^0_{m(\gamma^0)+e_j}\Big]\\
&&\qquad +\theta \dx\\
&&\le  E_{\mu^{l+1,0}_{n+\1-e_j}(\cdot;\xi^\ast(-e_j))}\Big[\sum_{0<k\le l+1}L_{x^j}(\gamma^{k},t_{k-1}, \xi^\ast{}^{k}_{m(\gamma^{k})}(-e_j))\dt \Big]+r+\theta \dx\\
&&\le  E_{\mu^{l+1,0}_{n+\1-e_j}(\cdot;\xi^\ast(-e_j))}\Big[\sum_{0<k\le l+1}\alpha \{1+|L(\gamma^{k},t_{k-1}, \xi^\ast{}^{k}_{m(\gamma^{k})}(-e_j))|\}\dt \Big]+r+\theta \dx\\
&&\le  E_{\mu^{l+1,0}_{n+\1-e_j}(\cdot;\xi^\ast(-e_j))}\Big[\sum_{0<k\le l+1}\alpha \{1+L(\gamma^{k},t_{k-1}, \xi^\ast{}^{k}_{m(\gamma^{k})}(-e_j))+2|L_\ast|\}\dt \Big]+r+\theta \dx\\
&&=  E_{\mu^{l+1,0}_{n+\1-e_j}(\cdot;\xi^\ast(-e_j))}\Big[\sum_{0<k\le l+1}\alpha \{1+L(\gamma^{k},t_{k-1}, \xi^\ast{}^{k}_{m(\gamma^{k})}(-e_j))+2|L_\ast|\}\dt\\
&&\quad +\alpha\{v^0_{m(\gamma^0)} -v^0_{m(\gamma^0)}\} \Big] +r+\theta \dx\\
&&\le \alpha E_{\mu^{l+1,0}_{n+\1-e_j}(\cdot;\xi^\ast(-e_j))}\Big[\sum_{0<k\le l+1} L(\gamma^{k},t_{k-1}, \xi^\ast{}^{k}_{m(\gamma^{k})}(-e_j))\dt
+v^0_{m(\gamma^0)} \Big]\\
&&\quad+\alpha T+2\alpha|L_\ast|T+\alpha R+r+\theta \dx\\
&&\le \alpha E_{\mu^{l+1,0}_{n+\1-e_j}(\cdot;\xi^\ast(-e_j))}\Big[\sum_{0<k\le l+1} L(\gamma^{k},t_{k-1}, 0)\dt+v^0_{m(\gamma^0)} \Big]\\
&&\quad+\alpha T+2\alpha|L_\ast|T+\alpha R+r+\theta \dx\\
&&\le\alpha\alpha_1+\alpha T+2\alpha|L_\ast|T+\alpha R+r+1 \\
&&=\alpha_2+r+1.
\end{eqnarray*}
We can show $(D_{x^j}v)^{l+1}_{n+\1}\ge -\alpha_2-r-1$ in a similar way with the minimizing control for $v^{l+1}_{n+\1+e_j}$. 
Therefore, we obtain 
\begin{eqnarray}\nonumber
&&\norm(D_{x}v)^{l+1}_{n+\1}\norm_\infty\le 1+r+\alpha_2,\\\label{bounded}
&&\norm H_p(x_{n+\1},t_{l+1},(D_{x}v)^{l+1}_{n+\1})\norm_\infty\le (d\lambda_1)^{-1}.
\end{eqnarray} 
With (\ref{dynmprog}) and similar calculation in (\ref{looooong}), we observe that for each $n$ and for any $\xi$, 
\begin{eqnarray}\nonumber
\quad E^{l+2}_{n+\1}(\xi)&=& L(x_{n+\1},t_{l+1},\xi^{l+2}_{n+\1})\dt +\sum_{\omega\in B}\rho^{l+2}_{n+\1}(\omega) E^{l+1}_{n+\1+\omega}(\xi|_{G_{n+\1+\omega}^{l+1,1}})+h\dt\\\label{unique}
&\ge&L(x_{n+\1},t_{l+1},\xi^{l+2}_{n+\1})\dt +\sum_{\omega\in B}\rho^{l+2}_{n+\1}(\omega) v^{l+1}_{n+\1+\omega}+h\dt\\\nonumber
&=&L(x_{n+\1},t_{l+1},\xi^{l+2}_{n+\1})\dt +\frac{1}{2d}\sum_{\omega\in B}v^{l+1}_{n+\1+\omega}-\xi^{l+2}_{n+\1}\cdot(D_xv)^{l+1}_{n+\1}\dt+h\dt\\\label{unique2}
&\ge&-H(x_{n+\1},t_{l+1},(D_{x}v)^{l+1}_{n+\1})\dt +\frac{1}{2d}\sum_{\omega\in B}v^{l+1}_{n+\1+\omega}+h\dt.
\end{eqnarray}
It follows from the assumption (A)$_l$ of induction and the properties of the Legendre transform that the above inequalities (\ref{unique}) and (\ref{unique2}) become equalities, if $\xi=\xi^\ast$ is given as 
\begin{eqnarray}\label{mini2}
\xi^\ast{}^{k+1}_{m}=H_p(x_{m},t_{k},(D_{x}v)^{k}_{m})\mbox{ \quad on $G_{n+\1}^{l+2,1}$},
\end{eqnarray}
which makes sense due to (\ref{bounded}). Hence, we obtain a minimizing control $\xi^\ast$ for $v^{l+2}_{n+\1}=\inf_\xi E^{l+2}_{n+\1}(\xi)$ with $\norm \xi^\ast\norm_\infty \le (d\lambda_1)^{-1}$ and 
\begin{eqnarray}\label{eqn2}
v^{l+2}_{n+\1} &=&-H(x_{n+\1},t_{l+1},(D_{x}v)^{l+1}_{n+\1})\dt +\frac{1}{2d}\sum_{\omega\in B}v^{l+1}_{n+\1+\omega}+h\dt\\\nonumber
&\Leftrightarrow&  (D_tv)^{l+2}_{n+\1}+H(x_{n+\1},t_{l+1},(D_{x}v)^{l+1}_{n+\1})=h.
\end{eqnarray}
\indent Let $\xi$ be another minimizing control for $v^{l+2}_{n+\1}$ different from the above $\xi^\ast$. If $\xi^{l+2}_{n+\1}\neq\xi^\ast{}^{l+2}_{n+\1}$, then (\ref{unique2}) becomes a strict inequality, yielding the contradiction that $v^{l+2}_{n+\1}>v^{l+2}_{n+\1}$. If $\xi^{l+2}_{n+\1}=\xi^\ast{}^{l+2}_{n+\1}$, then there necessarily exists $\omega\in B$ such that $\xi|_{G_{n+\1+\omega}^{l+1,1}}$ is not the minimizing control for $v_{n+\1+\omega}^{l+1}$, because of the assumption (A)$_l$ of our induction; hence (\ref{unique}) becomes a strict inequality, yielding the same contradiction.   

Thus, we conclude that the minimizing control $\xi^\ast$ for $v^{l+2}_{n+\1}(\xi)=\inf_\xi E^{l+2}_{n+\1}(\xi)$ is uniquely obtained for each $n$ as (\ref{mini2}) with $\norm \xi^\ast\norm_\infty \le (d\lambda_1)^{-1}$. 
By induction, (1) is clear. (2) follows from (\ref{mini1}) and (\ref{mini2}). (3) follows from (\ref{eqn1}) and (\ref{eqn2}). The inequalities in (4) are obtained in the above calculation to derive  (\ref{bounded}). 
\end{proof}
 Next, we study the hyperbolic scaling limit of our random walks in order to prove Theorem \ref{main2}. Let $\bar{\gamma}^k$ be the averaged path of $\gamma\in \Omega_n^{l+1,0}$ generated by a control $\xi$, i.e.,
$$\bar{\gamma}^k:=\sum_{\gamma\in\Omega^{l+1,0}_n}\mu^{l+1,0}_n(\gamma)\gamma^k,\quad 0\le k\le l+1,$$ 
where we use the short notation $\mu^{l+1,l'}_n(\cdot)$ instead of $\mu^{l+1,l'}_n(\cdot;\xi)$.  We see that $\bar{\gamma}$ satisfies 
$$\bar{\gamma}^k=\bar{\gamma}^{k+1}-\bar{\xi}^{k+1}\dt\,\,\,
{\rm \quad with \quad}\bar{\xi}^k:=\sum_{\gamma\in\Omega^{l+1,0}_n}\mu^{l+1,0}_n(\gamma)\xi^k_{m(\gamma^k)}.$$ 
In fact, we have 
\begin{eqnarray*}
\bar{\gamma}^k&=&\sum_{\gamma\in\Omega^{l+1,0}_n}\mu^{l+1,0}_n(\gamma)\gamma^k
=\sum_{\gamma\in\Omega^{l+1,k}_n}\mu^{l+1,k}_n(\gamma)\gamma^k\\
&=&\sum_{\gamma\in\Omega^{l+1,k+1}_n}\sum_{\omega\in B}\mu^{l+1,k+1}_n(\gamma)\rho^{k+1}_{m(\gamma^{k+1})}(\omega)(\gamma^{k+1}+\omega\dx)\\
&=&\bar{\gamma}^{k+1}+\sum_{\gamma\in\Omega^{l+1,k+1}_n}\sum_{\omega\in B}\mu^{l+1,k+1}_n(\gamma)  \Big( \frac{1}{2d}-\frac{\lambda}{2}\omega\cdot\xi^{k+1}_{m(\gamma^{k+1})}\Big)\omega\dx\\
&=&\bar{\gamma}^{k+1}-\sum_{\gamma\in\Omega^{l+1,k+1}_n}\sum_{\omega\in B}\mu^{l+1,k+1}_n(\gamma) \big(\omega\cdot\xi^{k+1}_{m(\gamma^{k+1})}\big) \omega\frac{\lambda}{2}\dx\\
&=&\bar{\gamma}^{k+1}-\frac{1}{2}\sum_{\omega\in B}(\omega\cdot\bar{\xi}^{k+1})\omega\dt\\
&=&\bar{\gamma}^{k+1}-\frac{1}{2}\sum_{i=1}^d\{(e_i\cdot\bar{\xi}^{k+1})e_i+(-e_i\cdot\bar{\xi}^{k+1}) (-e_i)\}\dt\\
&=&\bar{\gamma}^{k+1}-\bar{\xi}^{k+1}\dt. 
\end{eqnarray*}
\indent Let $\eta(\gamma)$ be a random variable defined for each $\gamma\in\Omega^{l+1,0}_n$ as
$$\eta^k(\gamma)=\eta^{k+1}(\gamma)-\xi^{k+1}_{m(\gamma^{k+1})}\dt,\,\,\,\eta^{l+1}(\gamma)=x_n.$$
Let $\eta_\Delta(\gamma)(\cdot):[0,t]\to\R^d$ denote the linear interpolation of $\eta(\gamma)$ with $t\in[t_{l+1},t_{l+2})$,
\begin{eqnarray*}
\eta_\Delta(\gamma)(s):=
\left\{
\begin{array}{lll}
&x_n \mbox{\quad for  $s\in[t_{l+1},t]$},
\medskip\\
&\eta^k(\gamma)+\frac{\eta^{k+1}(\gamma)-\eta^k(\gamma)}{\dt}(s-t_k) \mbox{\quad for  $s\in[t_{k},t_{k+1}]$}.
\end{array}
\right.
\end{eqnarray*} 
Define  $\tilde{\sigma}^k_i$ and $\tilde{\delta}^k_i$ for $i=1,\ldots,d$ as 
$$\tilde{\sigma}^k_i:=E_{\mu^{l+1,0}_n(\cdot;\xi)}[|(\eta^k(\gamma)-\gamma^k)^i|^2],\quad \tilde{\delta}^k_i:=E_{\mu^{l+1,0}_n(\cdot;\xi)}[|(\eta^k(\gamma)-\gamma^k)^i|],$$
where $(\eta^k(\gamma)-\gamma^k)^i$ denotes the $i$-th component of $\eta^k(\gamma)-\gamma^k$. The following lemma is a key to the convergence of our difference scheme:
\begin{Lemma}\label{limit theorem} 
For any control $\xi$, we have 
$$(\tilde{\delta}^k_i)^2\le \tilde{\sigma}^k_i\le (t_{l+1}-t_k)\frac{\dx}{\lambda}\quad{\rm for}\quad  0\le k\le l+1.$$
\end{Lemma}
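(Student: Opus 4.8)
The plan is to track the discrepancy $Z^k(\gamma):=\eta^k(\gamma)-\gamma^k$ between the deterministic trajectory $\eta$ and the random walk $\gamma$, and to show that its $i$-th component is a sum of orthogonal (martingale-type) increments whose per-step second moment is of order $\dx^2$. Since $\eta^{l+1}=\gamma^{l+1}=x_n$, we have $Z^{l+1}=0$. Subtracting the two backward recursions $\eta^k=\eta^{k+1}-\xi^{k+1}_{m(\gamma^{k+1})}\dt$ and $\gamma^k=\gamma^{k+1}+\omega^{k+1}\dx$, with $\omega^{k+1}:=(\gamma^k-\gamma^{k+1})/\dx$ and $\dt=\lambda\dx$, gives the one-step identity
$$Z^k=Z^{k+1}+W^{k+1},\qquad W^{k+1}:=-\big(\lambda\,\xi^{k+1}_{m(\gamma^{k+1})}+\omega^{k+1}\big)\dx.$$
The key structural fact is the drift-matching identity $\sum_{\omega\in B}\rho^{k+1}_m(\omega)\,\omega=-\lambda\,\xi^{k+1}_m$, which is precisely the computation already carried out for $\bar{\gamma}$ in the excerpt (the $\tfrac{1}{2d}\omega$ terms cancel by symmetry and $\sum_{\omega\in B}(\omega\cdot\xi)\omega=2\xi$). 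Consequently, conditioned on the position $\gamma^{k+1}=x_m$, the increment $W^{k+1}$ has mean zero, since $\sum_{\omega\in B}\rho^{k+1}_m(\omega)\big(\lambda\xi^{k+1}_m+\omega\big)=0$.

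Next I would derive a recursion for $\tilde{\sigma}^k_i=E_{\mu^{l+1,0}_n(\cdot;\xi)}[\,|(Z^k)^i|^2\,]$. Squaring $(Z^k)^i=(Z^{k+1})^i+(W^{k+1})^i$ and taking the expectation, the cross term $E_{\mu^{l+1,0}_n(\cdot;\xi)}[(Z^{k+1})^i(W^{k+1})^i]$ vanishes: using the product (semigroup) structure of the density $\mu$ split at level $k+1$, the factor $(Z^{k+1})^i$ depends only on the part $\gamma|_{k+1\le\cdot\le l+1}$, so summing the remaining move $\omega^{k+1}$ against $\rho^{k+1}_{m(\gamma^{k+1})}$ (and integrating out the lower part, which contributes $1$) produces the conditional mean of $(W^{k+1})^i$, equal to zero by the drift-matching identity. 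Hence
$$\tilde{\sigma}^k_i=\tilde{\sigma}^{k+1}_i+E_{\mu^{l+1,0}_n(\cdot;\xi)}\big[\,|(W^{k+1})^i|^2\,\big],\qquad \tilde{\sigma}^{l+1}_i=0.$$

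It then remains to bound the per-step term. Conditioned on $\gamma^{k+1}=x_m$, the quantity $\lambda(\xi^{k+1}_m)^i+(\omega^{k+1})^i$ is exactly $(\omega^{k+1})^i$ minus its conditional mean, so $E[\,|(W^{k+1})^i|^2\mid\gamma^{k+1}=x_m\,]=\dx^2\,\mathrm{Var}\big((\omega^{k+1})^i\big)$. Since only $\omega=\pm e_i$ contribute to $(\omega^i)^2$ and $\rho^{k+1}_m(e_i)+\rho^{k+1}_m(-e_i)=\tfrac1d$, one gets $E[\,|(\omega^{k+1})^i|^2\mid\gamma^{k+1}=x_m\,]=\tfrac1d$, whence
$$E\big[\,|(W^{k+1})^i|^2\mid\gamma^{k+1}=x_m\,\big]=\dx^2\Big(\tfrac1d-\lambda^2\big((\xi^{k+1}_m)^i\big)^2\Big)\le\frac{\dx^2}{d}.$$
Summing the recursion from $k$ to $l+1$ (there are $l+1-k$ terms) yields $\tilde{\sigma}^k_i\le(l+1-k)\dx^2/d\le(l+1-k)\dx^2=(t_{l+1}-t_k)\dx/\lambda$, using $d\ge1$ together with $(l+1-k)\dt=t_{l+1}-t_k$ and $\dt=\lambda\dx$. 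The remaining left inequality $(\tilde{\delta}^k_i)^2\le\tilde{\sigma}^k_i$ is immediate from the Cauchy--Schwarz (Jensen) inequality $E[|X|]^2\le E[|X|^2]$ applied to $X=(Z^k)^i$.

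The only genuine subtlety is the vanishing of the cross term, i.e. the orthogonality of the increments $W^{k+1}$ in spite of the space-time inhomogeneity of the transition probabilities. This rests entirely on the fact that the $\rho^{k+1}_m(\omega)$ are designed so that the conditional drift of the walk equals $-\lambda\xi^{k+1}_m$, matching the velocity field defining $\eta$; this is the very identity that forces $\bar{\gamma}$ to follow $\bar{\xi}$ in the excerpt. Once this is secured, everything reduces to elementary second-moment bookkeeping, and the factor $1/d$ in the per-step variance in fact leaves room to spare against the claimed bound.
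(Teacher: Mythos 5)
Your proof is correct and follows essentially the same route as the paper's: both derive the one-step recursion $\tilde{\sigma}^k_i=\tilde{\sigma}^{k+1}_i+E[|(W^{k+1})^i|^2]$ by expanding the square and killing the cross term with the drift-matching identity $\sum_{\omega\in B}\rho^{k+1}_m(\omega)\,\omega=-\lambda\xi^{k+1}_m$, then sum the per-step bound and finish with Jensen's inequality. The only difference is cosmetic: you phrase the cancellation in conditional-expectation (martingale) language and keep the exact value $E[(\omega^i)^2]=1/d$, giving the per-step bound $\dx^2/d$ where the paper is content with $\dx^2$.
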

\begin{Rem}
$\tilde{\sigma}^k_i$ can be seen as a generalization of the standard variance. The standard variance is of $O(1)$ as $\Delta\to0$ under hyperbolic scaling in general for space-time  inhomogeneous random walks. However, $\tilde{\sigma}^k_i$ and $\tilde{\delta}^k_i$ always tend to $0$ for any control $\xi$ as $\Delta\to0$ under hyperbolic scaling. In the homogeneous case (i.e., $\xi$ is constant), $\tilde{\sigma}^k_i$ is equal to the standard variance. See also \cite{Soga1}.
\end{Rem}
\begin{proof}[proof of Lemma \ref{limit theorem} ]
We observe that 
\begin{eqnarray*}
\tilde{\sigma}^k_i&=&\sum_{\gamma\in\Omega^{l+1,k}_n}\mu^{l+1,k}_n(\gamma)|(\eta^k(\gamma)-\gamma^k)^i|^2\\
&=&\sum_{\gamma\in\Omega^{l+1,k+1}_n}\mu^{l+1,k+1}_n(\gamma) \sum_{\omega\in B}\rho^{k+1}_{m(\gamma^{k+1})}(\omega) \\
&&\times\big\{  (\eta^{k+1}(\gamma)-\gamma^{k+1})^i - (\xi^{k+1}_{m(\gamma^{k+1})})^i\dt-\omega^i\dx  \big\}^2    \\
&=&\tilde{\sigma}^{k+1}_{i} 
+ \sum_{\gamma\in\Omega^{l+1,k+1}_n}\mu^{l+1,k+1}_n(\gamma)\Big[  \sum_{\omega\in B} \rho^{k+1}_{m(\gamma^{k+1})}(\omega) \big\{ \lambda(\xi^{k+1}_{m(\gamma^{k+1})})^i+ \omega^i \big\}^2  \Big]\dx^2\\
&&- \sum_{\gamma\in\Omega^{l+1,k+1}_n}\mu^{l+1,k+1}_n(\gamma)\Big\{ 2\lambda(\eta^{k+1}(\gamma)-\gamma^{k+1} )^i(\xi^{k+1}_{m(\gamma^{k+1})})^i\\
&& + 2(\eta^{k+1}(\gamma)-\gamma^{k+1} )^i\sum_{\omega\in B}\rho^{k+1}_{m(\gamma^{k+1})}(\omega)\omega^i    \Big\}\dx.
\end{eqnarray*}
Since 
\begin{eqnarray*}
\sum_{\omega\in B}\rho^{k+1}_{m(\gamma^{k+1})}(\omega)\omega^i
&=& \sum_{j=1}^d\Big\{\Big(\frac{1}{2d}-\frac{\lambda}{2}e_j\cdot\xi^{k+1}_{m(\gamma^{k+1})}\Big)e_j^i+\Big(\frac{1}{2d}-\frac{\lambda}{2}(-e_j)\cdot\xi^{k+1}_{m(\gamma^{k+1})}\Big)(-e_j^i)\Big\}\\
&=&-\lambda (\xi^{k+1}_{m(\gamma^{k+1})})^i,
\end{eqnarray*}
we obtain 
\begin{eqnarray*}
\tilde{\sigma}^k_{i}&=&\tilde{\sigma}^{k+1}_{i} 
+ \sum_{\gamma\in\Omega^{l+1,k+1}_n}\mu^{l+1,k+1}_n(\gamma)\Big[  \sum_{\omega\in B} \rho^{k+1}_{m(\gamma^{k+1})}(\omega) \big\{  \omega^i+\lambda(\xi^{k+1}_{m(\gamma^{k+1})})^i \big\}^2  \Big]\dx^2\\
&=&\tilde{\sigma}^{k+1}_{i} 
+ \sum_{\gamma\in\Omega^{l+1,k+1}_n}\mu^{l+1,k+1}_n(\gamma)\Big[  \sum_{\omega\in B} \rho^{k+1}_{m(\gamma^{k+1})}(\omega) \big[ (\omega^i)^2+  2\lambda\omega^i(\xi^{k+1}_{m(\gamma^{k+1})})^i \\
&&+\{\lambda(\xi^{k+1}_{m(\gamma^{k+1})})^i\}^2 \big] \Big]\dx^2\\
&\le&\tilde{\sigma}^{k+1}_{i} 
+ \sum_{\gamma\in\Omega^{l+1,k+1}_n}\mu^{l+1,k+1}_n(\gamma)\Big[  1+\{\lambda(\xi^{k+1}_{m(\gamma^{k+1})})^i\}^2  \\
&&+  2\lambda(\xi^{k+1}_{m(\gamma^{k+1})})^i  \sum_{\omega\in B} \rho^{k+1}_{m(\gamma^{k+1})}(\omega) \omega^i
 \Big\}\dx^2\\
 &=& \tilde{\sigma}^{k+1}_{i} 
+ \sum_{\gamma\in\Omega^{l+1,k+1}_n}\mu^{l+1,k+1}_n(\gamma)\Big[   1-\{\lambda(\xi^{k+1}_{m(\gamma^{k+1})})^i\}^2  \Big]\dx^2\\
&\le&\tilde{\sigma}^{k+1}_{i} +\frac{\dx}{\lambda}\dt.
\end{eqnarray*}
Since $\tilde{\sigma}^{l+1}_{i}=0$, the assertion is proved. 
\end{proof}
The rate $O(\sqrt{\dx})$ of convergence of our scheme comes from the asymptotics of $\tilde{\delta}^k_i$ as we will see below.

We observe the following facts on the viscosity solution $v$ of (\ref{HJ}): 
\begin{Lemma}\label{lemma-regularity}
Let $\gamma^\ast:[0,t]\to\R^d$ be a minimizing curve for $v(x,t)$.
 \begin{enumerate}
 \item [(1)]The following regularity properties hold:
 \begin{eqnarray*}
&&L^c_{\xi}(\gamma^\ast(\tau),\tau,\gamma^\ast{}'(\tau))\in\p^-_xv(\gamma^\ast(\tau),\tau)\mbox{ for $0\le \tau<t$}, \\
&&L^c_{\xi}(\gamma^\ast(\tau),\tau,\gamma^\ast{}'(\tau))\in\p^+_xv(\gamma^\ast(\tau),\tau)\mbox{ for $0< \tau\le t$,}
\end{eqnarray*}
 where $\p^-_xv$ (resp., $\p^+_xv$) denotes the subdifferential (resp., superdifferential). In particular $v_x(\gamma^\ast(\tau),\tau)$ exists for $0<\tau<t$ and is equal to $L^c_{\xi}(\gamma^\ast(\tau),\tau,\gamma^\ast{}'(\tau))$.
 \item [(2)]$|\gamma^\ast{}'(\tau)|\le(d\lambda_1)^{-1}$ for $0\le \tau\le t$, where $\lambda_1$ is the number given in the proof of Theorem \ref{main1}.
 \item[(3)] If $(x,t)$ is  regular, we have for any $0\le\tau< t$
 $$v_x(x,t)=\int^t_\tau L^c_x(\gamma^\ast(s),s,\gamma^\ast{}'(s))ds +L^c_{\xi}(\gamma^\ast(\tau),\tau,\gamma^\ast{}'(\tau)).$$ 
 If the initial data $v^0$ is semiconcave, the superdifferential of $v^0$ is not empty and $L^c_{\xi}(\gamma^\ast(0),0,\gamma^\ast{}'(0))=v^0_x(\gamma^\ast(0))$. 
 \end{enumerate}
 \end{Lemma}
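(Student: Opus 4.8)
The plan is to establish (1) by perturbing the minimizer $\gamma^\ast$ and using that, by Tonelli's theory, $\gamma^\ast$ is a $C^2$ solution of the Euler--Lagrange equation $\frac{d}{ds}L_\xi(\gamma^\ast,s,\gamma^\ast{}')=L_x(\gamma^\ast,s,\gamma^\ast{}')$; assertions (2) and (3) then follow from (1) by Legendre duality and the a priori Lipschitz estimate that already appears in the proof of Theorem \ref{main1}. Throughout I write $L$ for the Lagrangian, and the two one-sided differentials are obtained from one-sided variations of the endpoints of $\gamma^\ast$.

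For the superdifferential at $(\gamma^\ast(\tau),\tau)$ with $0<\tau\le t$, I fix $y\in\R^d$, set $w(s):=\frac{s}{\tau}y$, and compare $\gamma^\ast|_{[0,\tau]}$ (optimal for $v(\gamma^\ast(\tau),\tau)$ by Bellman's principle) with the competitor $\eta:=\gamma^\ast+w$ on $[0,\tau]$, which fixes the initial point and moves the endpoint to $\gamma^\ast(\tau)+y$. The Lax--Oleinik formula gives
\[
v(\gamma^\ast(\tau)+y,\tau)-v(\gamma^\ast(\tau),\tau)\le\int_0^\tau\big[L(\eta,s,\eta')-L(\gamma^\ast,s,\gamma^\ast{}')\big]\,ds,
\]
and a first-order expansion turns the right-hand side into $\int_0^\tau(L_x\cdot w+L_\xi\cdot w')\,ds+o(|y|)$, which integration by parts with the Euler--Lagrange equation collapses to the boundary term $L_\xi(\gamma^\ast(\tau),\tau,\gamma^\ast{}'(\tau))\cdot y$ (the $s=0$ contribution vanishes since $w(0)=0$). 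Hence $L_\xi(\gamma^\ast(\tau),\tau,\gamma^\ast{}'(\tau))\in\p^+_xv(\gamma^\ast(\tau),\tau)$. For the subdifferential at $0\le\tau<t$ I argue symmetrically with a fixed $\tau'\in(\tau,t]$: using $w(s):=\frac{\tau'-s}{\tau'-\tau}y$ and the competitor $\zeta:=\gamma^\ast+w$ on $[\tau,\tau']$ (fixed terminal point $\gamma^\ast(\tau')$, perturbed initial point) in the semigroup inequality for $v(\gamma^\ast(\tau'),\tau')$ and subtracting the equality realized by $\gamma^\ast$ reverses the inequality, and the same integration by parts yields $v(\gamma^\ast(\tau)+y,\tau)-v(\gamma^\ast(\tau),\tau)\ge L_\xi(\gamma^\ast(\tau),\tau,\gamma^\ast{}'(\tau))\cdot y+o(|y|)$, i.e. $L_\xi\in\p^-_xv$. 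At any interior time $0<\tau<t$ both one-sided differentials are nonempty and contain the same vector, so $v(\cdot,\tau)$ is differentiable at $\gamma^\ast(\tau)$ with $v_x=L_\xi(\gamma^\ast(\tau),\tau,\gamma^\ast{}'(\tau))$; this is (1).

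Assertion (3) follows quickly from (1): regularity of $(x,t)$ gives $\p^+_xv(x,t)=\{v_x(x,t)\}$, so the terminal case $\tau=t$ of (1) forces $v_x(x,t)=L_\xi(\gamma^\ast(t),t,\gamma^\ast{}'(t))$, and integrating the Euler--Lagrange identity from $\tau$ to $t$ produces $v_x(x,t)=\int_\tau^tL_x(\gamma^\ast,s,\gamma^\ast{}')\,ds+L_\xi(\gamma^\ast(\tau),\tau,\gamma^\ast{}'(\tau))$. If $v^0$ is semiconcave then $\p^+v^0$ is everywhere nonempty, and combined with $L_\xi(\gamma^\ast(0),0,\gamma^\ast{}'(0))\in\p^-v^0(\gamma^\ast(0))$ (the case $\tau=0$ of (1)) the two-sided nonemptiness forces differentiability of $v^0$ at $\gamma^\ast(0)$ with $v^0_x(\gamma^\ast(0))=L_\xi(\gamma^\ast(0),0,\gamma^\ast{}'(0))$. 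For (2), Legendre duality ($p=L_\xi(x,t,\xi)\Leftrightarrow\xi=H_p(x,t,p)$) applied to the interior identity of (1) gives $\gamma^\ast{}'(\tau)=H_p(\gamma^\ast(\tau),\tau,v_x(\gamma^\ast(\tau),\tau))$, so by the definition of $\lambda_1$ it suffices to prove the a priori bound $\|v_x\|_\infty\le1+r+\alpha_2$. I obtain this by mirroring, in the continuous setting, the chain of estimates of Theorem \ref{main1}: starting from the formula of (3) with $\tau=0$ (the boundary term being $v^0_x$ or, in the non-semiconcave case, $L_\xi$ at $s=0$, both bounded by $r$), bounding $|L_x|\le\alpha(1+|L|)$ by (H5), replacing $|L|$ by $L+2|L_\ast|$, and using optimality against the constant competitor $\gamma\equiv x$ to get $\int_0^tL(\gamma^\ast)\,ds+v^0(\gamma^\ast(0))\le\alpha_1$; this returns exactly $\|v_x\|_\infty\le\alpha_2+r<1+r+\alpha_2$, whence the velocity bound at interior times, extended to $\tau\in\{0,t\}$ by continuity of $\gamma^\ast{}'$.

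The Taylor expansions and the boundary bookkeeping are routine; the step needing the most care is the a priori bound $\|v_x\|_\infty\le1+r+\alpha_2$ in (2), since it must reproduce the precise constant of the discrete analysis using only (H5) and superlinearity. One must also check that the first-order expansions are uniform in $y$ on the relevant compact time intervals, which is exactly why the superdifferential argument requires $\tau>0$ and the subdifferential argument keeps $\tau'-\tau$ bounded away from $0$.
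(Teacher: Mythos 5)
Your proof is correct, and it is worth noting that the paper itself contains no proof of this lemma to compare against: its entire treatment is the remark that the statement is well-known and can be proved ``in the same manner as the proof of Lemma 3.2 in \cite{Soga2}''. What you wrote is precisely that classical argument: one-sided endpoint variations of the minimizer, linearization of the action, and integration by parts against the Euler--Lagrange equation so that everything collapses to the boundary term $L_\xi\cdot y$, giving the superdifferential inclusion for $\tau>0$ and the subdifferential inclusion for $\tau<t$; interior differentiability then follows from the standard fact that simultaneous nonemptiness of $\partial^+_xv$ and $\partial^-_xv$ forces differentiability; (3) follows by evaluating the $\tau=t$ inclusion at a regular point and integrating the Euler--Lagrange equation; and your bookkeeping of where $\tau>0$ (resp.\ $\tau'-\tau$ bounded below) is needed for the $o(\|y\|)$ remainders is the right one.

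Two small repairs would make part (2) airtight. First, (3) presupposes that $(x,t)$ is regular, while (2) must hold for minimizers to arbitrary points; so rather than ``the formula of (3) with $\tau=0$'' you should invoke the integrated Euler--Lagrange identity $L_\xi(\gamma^\ast(\tau),\tau,\gamma^\ast{}'(\tau))=L_\xi(\gamma^\ast(0),0,\gamma^\ast{}'(0))+\int_0^\tau L_x\,ds$, whose boundary term lies in $\partial^-v^0(\gamma^\ast(0))$ by the case $\tau=0$ of (1) and is therefore bounded by $r$, with no reference to $v_x$ at all. Second, and more substantively: to bound the momentum at \emph{every} $\tau\in[0,t]$ with the paper's constant, you need $\int_0^\tau L(\gamma^\ast,s,\gamma^\ast{}')\,ds\le\alpha_1+R$ for each intermediate $\tau$, whereas your constant-competitor comparison is stated only for the full interval $[0,t]$. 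Deducing the partial-action bound from the full-interval one costs an extra $|L_\ast|T$ (since $L$ may be negative on $[\tau,t]$), which inflates the momentum bound by $\alpha|L_\ast|T$ and can push it past $1+r+\alpha_2$, breaking the definition of $\lambda_1$. The fix is the tool you already use in (1): the restriction $\gamma^\ast|_{[0,\tau]}$ is itself a minimizer for $v(\gamma^\ast(\tau),\tau)$ by Bellman's principle, so you may run the constant-competitor comparison at the point $\gamma^\ast(\tau)$ for each $\tau$, which yields $\int_0^\tau L\,ds+v^0(\gamma^\ast(0))\le\alpha_1$ uniformly in $\tau$ and returns exactly $\|L_\xi\|_\infty\le r+\alpha_2<1+r+\alpha_2$, hence $\|\gamma^\ast{}'\|_\infty\le(d\lambda_1)^{-1}$, mirroring the induction in the discrete proof of Theorem \ref{main1}.
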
 
\noindent  This lemma is well-known (a proof is given  in the same manner as the proof of Lemma 3.2 in \cite{Soga2}). 
%
%
\begin{proof} [{\it Proof of Theorem \ref{main2}.}] Hereafter, $\beta_1,\beta_2,\ldots$ are constants independent of $\dx,\dt,x_m,t_k$ and $v^0$ in (\ref{HJ}). 

We prove (1). Since the solution $v$ of (\ref{HJ}) is Lipschitz ((1) and (2) of Lemma \ref{lemma-regularity} imply a  Lipschitz constant independent of $v^0$), it is enough to show $|v^{l+1}_n-v(x_n,t_{l+1})|\le \beta\sqrt{\dx}$. Let $\gamma^\ast$ be a minimizing curve for $v(x_n,t_{l+1})$. Consider the  control $\xi$ defined on $G_n^{l+1,1}$ as
$$\xi(x_m,t_{k+1}):=\gamma^\ast{}'(t_{k+1})$$
and the random walk $\gamma$ generated by $\xi$. Then, $\eta(\gamma)$ is independent of $\gamma\in\Omega_n^{l+1,0}$ and satisfies
\begin{eqnarray*}
&&\norm \eta^k(\gamma)-\gamma^\ast{}(t_k) \norm_\infty\le \beta_1\dx\mbox{\,\,\, for all  $0\le k\le l+1$},\\
&&\Big|\int_0^{t_{l+1}}L(\gamma^\ast(s),s,\gamma^\ast{}'(s))ds+v^0(\gamma^\ast(0))\\
&&\qquad\qquad -\Big(  \sum_{0<k\le l+1}L(\eta^k(\gamma),t_{k-1},\xi^k_{m(\gamma^k)})\dt +v^0(\eta^0(\gamma)) \Big)\Big|\le \beta_2\dx.
\end{eqnarray*}
It follows from Lemma \ref{limit theorem} that 
\begin{eqnarray*}
v^{l+1}_n&\le& E_{\mu(\cdot;\xi)}\Big[ \sum_{0<k\le l+1}L(\gamma^k,t_{k-1},\xi^k_{m(\gamma^k)})\dt+v^0_{m(\gamma^0)}  \Big]+ht_{l+1}\\
&\le& E_{\mu(\cdot;\xi)}\Big[ \sum_{0<k\le l+1}L(\gamma^k,t_{k-1},\xi^k_{m(\gamma^k)})\dt+v^0(\gamma^0)  \Big]+ht_{l+1}+r\dx \\
&\le & E_{\mu(\cdot;\xi)}\Big[ \sum_{0<k\le l+1}L(\eta^k(\gamma),t_{k-1},\xi^k_{m(\gamma^k)})\dt+v^0(\eta^0(\gamma))  \Big]+ht_{l+1} +\beta_3\sqrt{\dx}\\
&\le& \int_0^{t_{l+1}}L(\gamma^\ast(s),s,\gamma^\ast{}'(s))ds+v^0(\gamma^\ast(0)) +ht_{l+1}+\beta_4\sqrt{\dx}. 
\end{eqnarray*}
Hence, we obtain 
$$v^{l+1}_n-v(x_n,t_{l+1})\le \beta_4\sqrt{\dx}.$$
Let $\xi^\ast$ be the minimizing control for $v^{l+1}_n$. Consider the linear interpolation of $\eta(\gamma)$ within $[0,t_{l+1}]$. Then, we have 
\begin{eqnarray*}
&&\eta_\Delta(\gamma)'(s)=\xi^\ast{}^k_{m(\gamma^k)}\mbox{\,\,\,  for $s\in(t_{k-1},t_k)$},\\
&&v(x_n,t_{l+1})\le\int_0^{t_{l+1}}L(\eta_\Delta(s),s,\eta_\Delta(\gamma)'(s))dt+v^0(\eta_\Delta(\gamma)(0))+ht_{l+1}\mbox{\,\,\, for $\gamma\in\Omega^{l+1,0}_n$},\\
&&v(x_n,t_{l+1})\le E_{\mu^{l+1,0}_n(\cdot;\xi^\ast)}\Big[\int_0^{t_{l+1}}L(\eta_\Delta(s),s,\eta_\Delta(\gamma)'(s))dt+v^0(\eta_\Delta(\gamma)(0))\Big]+ht_{l+1}.
\end{eqnarray*}
It follows from Lemma \ref{limit theorem} that 
\begin{eqnarray*}
v^{l+1}_n &=&  E_{\mu^{l+1,0}_n(\cdot;\xi^\ast)}\Big[ \sum_{0<k\le l+1}L(\gamma^k,t_{k-1},\xi^\ast{}^k_{m(\gamma^k)})\dt+v^0_{m(\gamma^0)}  \Big]+ht_{l+1}\\
&\ge&  E_{\mu^{l+1,0}_n(\cdot;\xi^\ast)}\Big[ \sum_{0<k\le l+1}L(\gamma^k,t_{k-1},\xi^\ast{}^k_{m(\gamma^k)})\dt+v^0(\gamma^0)  \Big]+ht_{l+1}-r\dx\\
&\ge& E_{\mu^{l+1,0}_n(\cdot;\xi^\ast)}\Big[ \sum_{0<k\le l+1}L(\eta^k(\gamma),t_{k-1},\xi^\ast{}^k_{m(\gamma^k)})\dt+v^0(\eta^0(\gamma))  \Big]\\
&&+ht_{l+1}-\beta_5\sqrt{\dx}\\
&\ge& E_{\mu^{l+1,0}_n(\cdot;\xi^\ast)}\Big[ \int _0^{t_{l+1}} L(\eta_\Delta(\gamma)(s),s,\eta_\Delta(\gamma)'(s))ds+v^0(\eta_\Delta(\gamma)(0))  \Big]\\
&&+ht_{l+1}-\beta_6\sqrt{\dx}.
\end{eqnarray*}
Therefore, we obtain
\begin{eqnarray*}
v^{l+1}_n-v(x_n,t_{l+1})&\ge& -\beta_6\sqrt{\dx}.
\end{eqnarray*}
Thus, (1) is proved.

In order to prove (2), we prepare two lemmas.
\begin{Lemma}\label{cal}
Let $\gamma^\ast$ be the unique minimizer for $v(x,t)$. Define the set $\Gamma^\varepsilon$ with $\varepsilon>0$ and $b>0$ as the family of all Lipschitz-curves $\nu:[0,t]\to\R^d$ such that 
\begin{eqnarray*}
&&\norm \nu(t)-\gamma^\ast(t) \norm_\infty\le \varepsilon,\quad \norm \nu'(s) \norm_\infty\le b\mbox{ for a.e. $s\in[0,t]$},\\
&&\int^t_0L(\nu(s),s,\nu'(s))ds+v^0(\nu(0))\le \int^t_0L(\gamma^\ast(s),s,\gamma^\ast{}'(s))ds+v^0(\gamma^\ast(0))+\varepsilon.
\end{eqnarray*}
 Then, we have as $\varepsilon\to0$,
$$\sup_{\nu\in\Gamma^\varepsilon}\norm \nu-\gamma^\ast\norm_{C^0([0,t])}\to0,\,\,\,\sup_{\nu\in\Gamma^\varepsilon}\norm \nu'-\gamma^\ast{}'\norm_{L^2([0,t])}\to0.$$
\end{Lemma}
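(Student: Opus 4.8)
The plan is to argue by the direct method of the calculus of variations and then upgrade the resulting weak convergence of the derivatives to the claimed strong convergences. Throughout I would write
$$J[\nu]:=\int_0^t L(\nu(s),s,\nu'(s))\,ds+v^0(\nu(0)),$$
so that $\gamma^\ast$ is the unique minimizer of $J$ among absolutely continuous curves ending at $\gamma^\ast(t)$ at time $t$, while every $\nu\in\Gamma^\varepsilon$ satisfies $J[\nu]\le J[\gamma^\ast]+\varepsilon$ together with $\norm \nu(t)-\gamma^\ast(t)\norm_\infty\le\varepsilon$ and $\norm \nu'\norm_\infty\le b$ a.e. Suppose the conclusion fails. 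Then there are $\varepsilon_n\to0$, curves $\nu_n\in\Gamma^{\varepsilon_n}$ and $\delta>0$ such that, for every $n$, at least one of $\norm \nu_n-\gamma^\ast\norm_{C^0([0,t])}\ge\delta$ or $\norm \nu_n'-\gamma^\ast{}'\norm_{L^2([0,t])}\ge\delta$ holds.

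First I would extract a limit. The family $\{\nu_n\}$ is equi-Lipschitz with constant $b$ and uniformly bounded (its endpoints converge to $\gamma^\ast(t)$), so by Arzel\`a--Ascoli a subsequence converges uniformly to a Lipschitz curve $\bar\nu$ with $\norm \bar\nu'\norm_\infty\le b$, and along this subsequence $\nu_n'\rightharpoonup\bar\nu'$ weakly in $L^2([0,t])$. Passing to the limit in the endpoint bound gives $\bar\nu(t)=\gamma^\ast(t)$. By Tonelli's lower semicontinuity theorem, which applies because $L$ is continuous and convex in $\xi$ by (L1)--(L2), together with continuity of $v^0$ and $\nu_n(0)\to\bar\nu(0)$, one obtains $J[\bar\nu]\le\liminf_n J[\nu_n]\le J[\gamma^\ast]$. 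Since $\bar\nu$ is admissible for $v(\gamma^\ast(t),t)$ and $\gamma^\ast$ is the unique minimizer, this forces $\bar\nu=\gamma^\ast$. In particular $\norm \nu_n-\gamma^\ast\norm_{C^0}\to0$ along the subsequence, and, combining $\limsup_n J[\nu_n]\le J[\gamma^\ast]$ with the semicontinuity bound, also $J[\nu_n]\to J[\gamma^\ast]$.

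The remaining and main point is to promote the weak convergence $\nu_n'\rightharpoonup\gamma^\ast{}'$ to strong convergence in $L^2$. Here I would invoke uniform convexity of $L$ in $\xi$: from the identity $L_{\xi\xi}=H_{pp}^{-1}$ derived in the proof of (L1)--(L4), and since (H4) bounds $H_{pp}$ from above on the bounded $p$-range corresponding to $\norm \xi\norm\le b$ (the latter range being compact and uniform in $(x,t)$ by (L4)), there is $c>0$ with $L_{\xi\xi}\ge cI$ on $\R^d\times\R\times\{\norm \xi\norm\le b\}$, enlarging $b$ if needed to include $\norm \gamma^\ast{}'\norm_\infty$. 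This yields, for a.e. $s$,
\begin{align*}
L(\nu_n(s),s,\nu_n'(s))
&\ge L(\nu_n(s),s,\gamma^\ast{}'(s))
+L_\xi(\nu_n(s),s,\gamma^\ast{}'(s))\cdot(\nu_n'(s)-\gamma^\ast{}'(s))\\
&\quad+\tfrac{c}{2}\,\norm \nu_n'(s)-\gamma^\ast{}'(s)\norm^2 .
\end{align*}
Integrating and letting $n\to\infty$, the left-hand integral tends to $\int_0^t L(\gamma^\ast,s,\gamma^\ast{}')\,ds$ (from $J[\nu_n]\to J[\gamma^\ast]$ and $v^0(\nu_n(0))\to v^0(\gamma^\ast(0))$); on the right, the first integral converges to the same quantity by uniform convergence $\nu_n\to\gamma^\ast$ and dominated convergence, while the linear term vanishes because $L_\xi(\nu_n,\cdot,\gamma^\ast{}')\to L_\xi(\gamma^\ast,\cdot,\gamma^\ast{}')$ strongly in $L^2$ and $\nu_n'-\gamma^\ast{}'\rightharpoonup0$ weakly. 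Hence $\tfrac{c}{2}\limsup_n\int_0^t\norm \nu_n'-\gamma^\ast{}'\norm^2\,ds\le0$, so $\norm \nu_n'-\gamma^\ast{}'\norm_{L^2}\to0$. Both convergences now hold along the subsequence, contradicting the choice of $\delta$. I expect the delicate steps to be the handling of the mixed strong--weak product (the linear term) and securing the \emph{uniform} lower bound $c$ on $L_{\xi\xi}$ over all $(x,t)$; the uniform convergence part is the standard direct-method argument.
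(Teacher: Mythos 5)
Your proof is correct and follows essentially the same route as the paper's: the paper simply defers this lemma to Lemma 3.4 of \cite{Soga2}, whose argument is precisely your scheme of contradiction, Arzel\`a--Ascoli plus weak $L^2$ convergence of derivatives, Tonelli lower semicontinuity and uniqueness of the minimizer to identify the limit, and then uniform convexity of $L$ in $\xi$ (via $L_{\xi\xi}=H_{pp}^{-1}\ge cI$ on the relevant compact $\xi$-range, uniformly in $(x,t)$ by (H4) and (L4)) to upgrade weak to strong convergence of the derivatives. Your handling of the two delicate points you flag --- the strong--weak product in the linear term and the uniformity of $c$ --- is sound, so nothing is missing.
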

\noindent Here, $\norm \nu\norm_{C^0([0,t])}:=\sup_{s\in[0,t]}\norm \nu(s) \norm $ and $\norm \nu\norm_{L^2([0,t])}:=\{\int_0^t \norm \nu(s) \norm^2 ds\}^\frac{1}{2}$.  Lemma \ref{cal} is proved in a similar way to the proof of Lemma 3.4 in \cite{Soga2}.

\begin{Lemma}\label{convert}
Let $f:[0,t]\to\R$ be a Lipschitz function with a Lipschitz constant $\theta$ satisfying $f(t)=0$. Then, it holds that $\norm f\norm_{C^0([0,t])}\le\theta\norm f \norm_{L^2([0,t])}+\sqrt{\norm f\norm_{L^2([0,t])}}$.
\end{Lemma}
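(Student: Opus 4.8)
The plan is to argue by contradiction, extracting from a single point where $f$ attains its supremum a lower bound on $\norm f\norm_{L^2([0,t])}$ that becomes incompatible with the supremum being too large. Write $M:=\norm f\norm_{C^0([0,t])}$ and $L:=\norm f\norm_{L^2([0,t])}$. If $M=0$ the inequality is trivial, so I assume $M>0$. First I would pick $s_0\in[0,t]$ with $|f(s_0)|=M$ (it exists since $f$ is continuous on a compact interval). Because $f(t)=0$ and $f$ is $\theta$-Lipschitz, $M=|f(s_0)-f(t)|\le\theta|s_0-t|$, hence $t-s_0\ge M/\theta$; in particular there is an interval of length at least $M/\theta$ to the right of $s_0$ lying inside $[0,t]$ (the case $s_0=t$ forces $M=0$ and is already excluded).

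Next, the Lipschitz property gives the pointwise lower bound $|f(s)|\ge M-\theta(s-s_0)$ for $s\ge s_0$, so on any subinterval $[s_0,s_0+\rho]$ with $0\le\rho\le M/\theta$ the integrand $|f|^2$ is at least $(M-\theta\rho)^2$, whence
$$L^2=\norm f\norm_{L^2([0,t])}^2\ge\int_{s_0}^{s_0+\rho}|f(s)|^2\,ds\ge\rho\,(M-\theta\rho)^2,$$
valid for every $\rho\in[0,M/\theta]$ since such an interval fits inside $[0,t]$ by the previous step. Now suppose, for contradiction, that $M>\theta L+\sqrt{L}$. Choosing $\rho:=L$, I would check that $\rho\le M/\theta$ (because $M>\theta L$) and that $M-\theta\rho=M-\theta L>\sqrt{L}\ge0$, so $(M-\theta\rho)^2>L$; substituting into the displayed inequality gives $L^2\ge\rho(M-\theta\rho)^2>L\cdot L=L^2$, a contradiction. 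Hence $M\le\theta L+\sqrt{L}$, which is the claim.

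The main obstacle---really the only nontrivial idea---is selecting the cut-off length $\rho$. A naive optimization of $\rho\mapsto\rho(M-\theta\rho)^2$ over $[0,M/\theta]$ produces a cube-root bound $M\lesssim(\theta L^2)^{1/3}$, which is not of the stated additive form; the cleaner route is the contradiction argument with the specific choice $\rho=L$, for which one must verify that this $\rho$ respects both constraints $\rho\le M/\theta$ and $[s_0,s_0+\rho]\subseteq[0,t]$. Both hold precisely because $f(t)=0$ forces $t-s_0\ge M/\theta\ge L$ under the contradiction hypothesis, so no further case distinction is needed. Finally, replacing $s-s_0$ by $|s-s_0|$ in the pointwise bound shows that the sign of $f(s_0)$ is irrelevant and that the estimate is available whether the supremum sits to the left or the right of $s_0$, so the choice to integrate on $[s_0,s_0+\rho]$ (using the guaranteed room toward $t$) is without loss of generality.
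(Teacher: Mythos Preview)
Your argument is correct. The paper does not supply an inline proof of this lemma but merely cites Lemma~3.5 of \cite{Soga2}, so there is no ``paper's own proof'' to compare against; your contradiction argument with the choice $\rho=L$ is clean and complete (the implicit hypothesis $L>0$ follows from $M>0$ and continuity, so the strict inequality $L^2>L^2$ is a genuine contradiction).
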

\noindent See Lemma 3.5 of \cite{Soga2} for a proof. 

We prove (2). For each fixed $\varepsilon>0$, define the set 
\begin{eqnarray*}
\Omega^\varepsilon_\Delta:=\{ \gamma\in\Omega^{l+1,0}_n\,\,|\,\,\norm \gamma_\Delta-\gamma^\ast\norm_{C^0([0,t])}\le \varepsilon \}.
\end{eqnarray*}
Our task is  to prove that $prob(\Omega^\varepsilon_\Delta)\to1$ as $\Delta\to0$. We first obtain an estimate of $\norm\gamma_\Delta-\gamma^\ast\norm_{L^2([0,t])}$ and then convert it into that of $\norm \gamma_\Delta-\gamma^\ast\norm_{C^0([0,t])}$ using  Lemma \ref{convert}.
 
Observe that 
\begin{eqnarray}\label{star}
&&(E_{\mu^{l+1,0}_n(\cdot;\xi^\ast)}[ \norm\gamma_\Delta-\gamma^\ast\norm_{L^2([0,t])} ])^2\le 
E_{\mu^{l+1,0}_n(\cdot;\xi^\ast)}[ \norm\gamma_\Delta-\gamma^\ast\norm_{L^2([0,t])}^2 ]\\\nonumber
&&\qquad \le 2E_{\mu^{l+1,0}_n(\cdot;\xi^\ast)}[ \norm\gamma_\Delta-\eta_\Delta(\gamma)\norm_{L^2([0,t])}^2 ]+2E_{\mu^{l+1,0}_n(\cdot;\xi^\ast)}[ \norm\eta_\Delta(\gamma)-\gamma^\ast\norm_{L^2([0,t])}^2 ],
\end{eqnarray}
where $E_{\mu(\cdot;\xi^\ast)}[ \norm\gamma_\Delta-\eta_\Delta(\gamma)\norm_{L^2([0,t])}^2 ]$ tends to $0$ as $\Delta\to0$ due to Lemma \ref{limit theorem}. We show that $E_{\mu^{l+1,0}_n(\cdot;\xi^\ast)}[ \norm\eta_\Delta(\gamma)-\gamma^\ast\norm_{L^2([0,t])}^2 ]$ also tends to $0$ as $\Delta\to0$. For this purpose, define the  set 
$$\tilde{\Omega}^\varepsilon_\Delta:=\{ \gamma\in\Omega^{l+1,0}_n\,\,|\,\,\norm \eta_\Delta(\gamma)-\gamma^\ast\norm_{C^0([0,t])}\le \varepsilon,\,\,\,\norm \eta_\Delta(\gamma)'-\gamma^\ast{}'\norm_{L^2([0,t])}\le \varepsilon \}$$
for each fixed $\varepsilon>0$, and show $prob(\tilde{\Omega}^\varepsilon_\Delta)\to 1$ as $\Delta\to0$. It follows from Lemma \ref{limit theorem} that 
\begin{eqnarray*}
v^{l+1}_n&=&E_{\mu^{l+1,0}_n(\cdot;\xi^\ast)}\Big[ \int^t_0L(\eta_\Delta(\gamma)(s),s,\eta_{\Delta}(\gamma)'(s)  )ds+v^0(\eta_\Delta(\gamma)(0) ) \Big]\\
&&+ht_{l+1} +O(\sqrt{\dx}). 
\end{eqnarray*}
By (1), we have 
\begin{eqnarray*}
v^{l+1}_n-v(x,t)&=&O(\sqrt{\dx})\\
&=& E_{\mu^{l+1,0}_n(\cdot;\xi^\ast)}\Big[ \int^t_0L(\eta_\Delta(\gamma)(s),s,\eta_{\Delta}(\gamma)'(s)  )ds+v^0(\eta_\Delta(\gamma)(0) ) \\
&&- \Big(  \int^t_0L(\gamma^\ast(s),s,\gamma^\ast{}'(s)  )ds+v^0(\gamma^\ast(0) ) \Big)
 \Big]+h(t_{l+1}-t) +O(\sqrt{\dx}). 
\end{eqnarray*}
 Hence, we obtain 
 \begin{eqnarray}\label{sharp}
&& E_{\mu^{l+1,0}_n(\cdot;\xi^\ast)}\Big[ \int^t_0L(\eta_\Delta(\gamma)(s),s,\eta_{\Delta}(\gamma)'(s)  )ds+v^0(\eta_\Delta(\gamma)(0) ) \\\nonumber
&&\qquad\qquad - \Big(  \int^t_0L(\gamma^\ast(s),s,\gamma^\ast{}'(s)  )ds+v^0(\gamma^\ast(0) ) \Big)  \Big]=O(\sqrt{\dx}).
 \end{eqnarray}
Consider the set 
 \begin{eqnarray*}
 \Omega^+&:=&\Big\{\gamma\in\Omega^{l+1,0}_n \,|\,  \int^t_0L(\eta_\Delta(\gamma)(s),s,\eta_{\Delta}(\gamma)'(s)  )ds+v^0(\eta_\Delta(\gamma)(0) )  \\
 &&\qquad\qquad\qquad- \Big(  \int^t_0L(\gamma^\ast(s),s,\gamma^\ast{}'(s)  )ds+v^0(\gamma^\ast(0) ) \Big)\ge \dx^{\frac{1}{4}}  \Big\}.
 \end{eqnarray*}
 Since $\gamma^\ast$ is a minimizing curve for $v(x,t)$, we have for each $\gamma\in \Omega_n^{l+1,0}$,
 \begin{eqnarray*}
 0&\le& \int^t_0L(\eta_\Delta(\gamma)(s)+x-x_n,s,\eta_{\Delta}(\gamma)'(s)  )ds+v^0(\eta_\Delta(\gamma)(0) +x-x_n) \\
&&- \Big(  \int^t_0L(\gamma^\ast(s),s,\gamma^\ast{}'(s)  )ds+v^0(\gamma^\ast(0) )\Big)\\
&\le& \int^t_0L(\eta_\Delta(\gamma)(s),s,\eta_{\Delta}(\gamma)'(s)  )ds+v^0(\eta_\Delta(\gamma)(0) ) \\
&&- \Big(  \int^t_0L(\gamma^\ast(s),s,\gamma^\ast{}'(s)  )ds+v^0(\gamma^\ast(0) ) \Big) +\beta_7\dx.
 \end{eqnarray*}
Hence, noting that $\sum_{\Omega_n^{l+1,0}}=\sum_{\Omega^+}+\sum_{\Omega_n^{l+1,0}\setminus \Omega^+}$ in (\ref{sharp}), we obtain 
$$O(\sqrt{\dx})\ge  prob(\Omega^+)\dx^{\frac{1}{4}}+prob(\Omega_n^{l+1,0}\setminus \Omega^+)(-\beta_7\dx)\ge prob(\Omega^+)\dx^{\frac{1}{4}}-\beta_7\dx,$$
 which yields $prob(\Omega^+)=O(\dx^{\frac{1}{4}})$. 
Since $\gamma^\ast$ is the unique minimizing curve, it follows from Lemma \ref{cal} that $\Omega^{l+1,0}_n\setminus \Omega^+\subset \tilde{\Omega}^\varepsilon_\Delta$ for $\dx\ll \varepsilon$, which means that $prob(\tilde{\Omega}^\varepsilon_\Delta)\to 1$ as $\Delta\to0$. Therefore,  we obtain the convergence $E_{\mu^{l+1,0}_n(\cdot;\xi^\ast)}[ \norm\eta_\Delta(\gamma)-\gamma^\ast\norm_{L^2([0,t])}^2 ]\to0$ as $\Delta\to0$, which implies that the left hand side of (\ref{star}) tends to $0$ as $\Delta\to0$. From this, we see that for any $\varepsilon'>0$ there exists $\delta(\ep')>0$ such that if $|\Delta|<\delta(\ep')$ we have 
$$E_{\mu_n^{l+1,0}(\cdot;\xi^\ast)}[ \norm\gamma_\Delta-(\gamma^\ast+\gamma_\Delta(t)-x)\norm_{L^2([0,t])} ]\le \varepsilon'.$$
 Define $\Omega^{++}:=\{\gamma\in\Omega^{l+1,0}_n \,|\,   \norm\gamma_\Delta-(\gamma^\ast+\gamma_\Delta(t)-x)\norm_{L^2([0,t])} \ge\sqrt{\varepsilon'} \}$.  
 Then, we have $prob(\Omega^{++})\le\sqrt{\varepsilon'}$. By Lemma \ref{convert} (note that $\gamma_\Delta(t)-(\gamma^\ast(t)+\gamma_\Delta(t)-x)=0$), we obtain 
 $\norm \gamma_\Delta-(\gamma^\ast+\gamma_\Delta(t)-x)\norm_{C^0([0,t])}\le  O(\ep'{}^{\frac{1}{4}})$ for all $\gamma\in\Omega^{l+1,0}_n\setminus \Omega^{++}$. If $\ep' \ll \ep^4$, we have  $\norm \gamma_\Delta-\gamma^\ast\norm_{C^0([0,t])}\le \varepsilon$  for all $\gamma\in\Omega^{l+1,0}_n\setminus \Omega^{++}$ with $|\Delta|< \delta(\ep')$.  Therefore, it holds that, for any $\ep'>0$ with $\ep'\ll \ep$, there exists $\delta(\ep')>0$ such that if  $|\Delta|< \delta(\ep')$ we have  $\Omega^{l+1,0}_n\setminus \Omega^{++}\subset\Omega^\varepsilon_\Delta$ and $prob(\Omega^\varepsilon_\Delta)\ge 1- \sqrt{\ep'}$, which means $prob(\Omega^\varepsilon_\Delta)\to1$ as $\Delta\to0$.

Note that, for each fixed $m$, the same convergence result holds for the minimizing random walks for $v^{l+1}_{n+m}$, where $n$ is taken so that $x\in[x^1_{n^1-1},x^1_{n^1+1})\times\cdots\times[x^d_{n^d-1},x^d_{n^d+1})$ accordingly to $\Delta$.    

\indent We prove (3). Let $(x,t)$ be an arbitrary regular point with $t>0$. Let $\gamma^\ast$ be the unique minimizing curve for $v(x,t)$. We have for $j=1,\cdots,d$, 
\begin{eqnarray}\label{555555}
v_{x^j}(x,t)=\int^t_0 L_{x^j}(\gamma^\ast(s),s,\gamma^\ast{}'(s))ds+v^0_{x^j}(\gamma^\ast(0)).
\end{eqnarray}
 Let $(x_{n+1},t_{l+1})\in \G$ be a point such that $t\in [t_{l+1},t_{l+2})$ and  $x\in[x^1_{n^1-1},x^1_{n^1+1})\times\cdots\times[x^d_{n^d-1},x^d_{n^d+1})$.  Then, $u^j_\Delta(x,t)=(D_{x^j}v)^{l+1}_{n+\1}$. By (4) of Theorem \ref{main1}, we have 
\begin{eqnarray*}  
(D_{x^j}v)^{l+1}_{n+\1}&\le& E_{\mu_{n+\1-e_j}^{l+1,0}(\cdot;\xi^\ast(-e_j))}\Big[  \sum_{0<k\le l+1} L_{x^j}(\gamma^k,t_{k-1},\xi^k_{m(\gamma^{k})}(-e_j))\dt+ (D_{x^j}v)^{0}_{m(\gamma^0)+e_j}\Big]\\
&&+\theta \dx\\
&\le&E_{\mu_{n+\1-e_j}^{l+1,0}(\cdot;\xi^\ast(-e_j))}\Big[ \int_0^t L_{x^j}(\eta_\Delta(\gamma)(s),s,\eta_\Delta(\gamma)'(s))ds+ (D_{x^j}v)^{0}_{m(\gamma^0)+e_j}\Big]\\
&&+\beta_8 \sqrt{\dx}.
\end{eqnarray*}
Hence, we have 
\begin{eqnarray*}
&&(D_{x^j}v)^{l+1}_{n+\1}-v_{x^j}(x,t)\le R_1+R_2+\beta_8 \sqrt{\dx},\\
&&R_1:=E_{\mu_{n+\1-e_j}^{l+1,0}(\cdot;\xi^\ast(-e_j))}\Big[ \int_0^t \big(L_{x^j}(\eta_\Delta(\gamma)(s),s,\eta_\Delta(\gamma)'(s)) - L_{x^j}(\gamma^\ast(s),s,\gamma^\ast{}'(s))\big)ds\Big], \\
&&R_2:=  E_{\mu_{n+\1-e_j}^{l+1,0}(\cdot;\xi^\ast(-e_j))}\Big[(D_{x^j}v)^{0}_{m(\gamma^0)+e_j}-v^0_{x^j}(\gamma^\ast(0))\Big]. 
\end{eqnarray*}
We already know from the proof of (2) that, for each $j$, we have $\norm \eta_\Delta(\gamma)-\gamma^\ast\norm_{C^0([0,t])}\to0$,  $\norm \eta_\Delta(\gamma)'-\gamma^\ast{}'\norm_{L^2([0,t])}\to0$ in probability as $\Delta\to0$, which yields $R_1\to0$ as $\Delta\to0$.  Since $v^0$ is semiconcave, we have 
$$\sup_{\{x: \norm x-\gamma^\ast(0)\norm_{\infty}\le \varepsilon\}}|v^0_{x^j}(x)-v^0_{x^j}(\gamma^\ast(0))|\to0\mbox{ \,\, as $\varepsilon\to0$}$$
 (see, e.g.,  Proposition 3.3.4 in \cite{Cannarsa}). By (2), we have $\gamma^0\to\gamma^\ast(0)$ in probability as $\Delta\to0$. Therefore, we see that 
\begin{eqnarray*}
(D_{x^j}v)^{0}_{m(\gamma^0)+e_j}&=&
\frac{v^0(\gamma^0+2\dx\cdot e_j)-v^0(\gamma^0)}{2\dx}
= \frac{1}{2\dx}\int_0^{2\dx} v^0_{x^j}(\gamma^{0}+ye_j)dy\\
&&\to v^0_{x^j}(\gamma^\ast(0)) \mbox{\,\,\, in probability as $\Delta\to0$} 
\end{eqnarray*}
in the case of (\ref{initial-1}), and that 
\begin{eqnarray*}
(D_{x^j}v)^{0}_{m(\gamma^0)+e_j}&=&\frac{1}{(2\dx)^{d}} \int_{[-\dx,\dx]^d}  \frac{v^0(\gamma^0+2\dx\cdot e_j+y)-v^0(\gamma^0+y)}{2\dx} dy\\
&=& \frac{1}{(2\dx)^{d}} \int_{[-\dx,\dx]^d}\Big(\frac{1}{2\dx}\int_0^{2\dx} v^0_{x^j}(\gamma^{0}+se_j+y)ds\Big)dy\\
&&\to v^0_{x^j}(\gamma^\ast(0)) \mbox{\,\,\, in probability as $\Delta\to0$} 
\end{eqnarray*}
in the case of (\ref{initial-2}), which yields $R_2\to0$ as $\Delta\to0$. Thus, we conclude that
$$\limsup_{\Delta\to0}(u^j_\Delta(x,t)-v_{x^j}(x,t))\le0.$$
Similar reasoning yields  
$$\liminf_{\Delta\to0}(u^j_\Delta(x,t)-v_{x^j}(x,t))\ge0.$$
\indent We prove (4). Let $v^{0\delta}$ be the mollified function of $v^0$ with the standard mollifier. Let $v^\delta$, $v^\delta{}^k_{m+\1}$ be the solution of (\ref{HJ}), (\ref{HJ-Delta})  with the initial data $v^{0\delta}$, respectively.  The interpolation of $(D_{x^j}v^\delta)^k_m$  is denoted by $u^\delta{}^j_\Delta $.  We use the notation $u^j{}^k_{m}:=(D_{x^j}v)^k_m$, $u^{j\delta}{}^k_{m}:=(D_{x^j}v^\delta)^k_m$. 

We start with 
\begin{eqnarray*}
\norm u^j_\Delta(\cdot,t)-v_{x^j}(\cdot,t)\norm_{L^1(K)}&\le&
\norm u^j_\Delta(\cdot,t)-u^\delta{}^j_\Delta(\cdot,t)\norm_{L^1(K)}\\
&&+\norm u^\delta{}^j_\Delta(\cdot,t)-v^\delta_{x^j}(\cdot,t)\norm_{L^1(K)}\\
&&+\norm v^\delta_{x^j}(\cdot,t)-v_{x^j}(\cdot,t)\norm_{L^1(K)}. 
\end{eqnarray*}
Take any sequence $\delta_i\to0+$ as $i\to\infty$ and set $\delta=\delta_i$. Let  $(x,t)$ be a point such that $v_x(x,t)$ and $v^{\delta_i}_x(x,t)$ ($i\in\N$) exist  (a.e. points are such). The minimizing curve $\gamma^\ast$ of $v(x,t)$ and $\gamma^\ast{}^{\delta_i}$ of $v^{\delta_i}(x,t)$ with any $i\in\N$ lie on a compact set $\tilde{K}$ due to Lemma \ref{lemma-regularity}. The variational representation of  $v(x,t)$ and $v^{\delta_i}(x,t)$ implies $|v^{\delta_i}(x,t)-v(x,t)|\le \sup_{y\in\tilde{K}}|v^0{}^{\delta_i}(y)-v^0(y)|\to0$ as $i\to0$. Hence, it follows from Lemma \ref{cal} that $\gamma^\ast{}^{\delta_i}\to\gamma^\ast$,  $\gamma^\ast{}^{\delta_i}{}'\to\gamma^\ast{}'$ uniformly as $i\to\infty$ (note that $\gamma^\ast{}^{\delta_i},\gamma^\ast$ are $C^2$-solutions of the Euler-Lagrange equation).
Therefore, (3) of Lemma \ref{lemma-regularity} implies that $v^{\delta_i}_{x^j}(x,t)-v_{x^j}(x,t)\to0$ as $i\to\infty$.  Thus, we conclude that  $\norm v^\delta_{x^j}(\cdot,t)-v_{x^j}(\cdot,t)\norm_{L^1(K)}  \to0$ as $\delta\to0+$.  

Let $\Gamma\subset\R^d$ be a closed cube containing $K$ and let $\Gamma+a$ with $a>0$ denote the $a$-neighborhood of $\Gamma$.   Fix  $\ep>0$ arbitrarily. Fix $\delta>0$ so that 
$$\norm v^\delta_{x^j}(\cdot,t)-v_{x^j}(\cdot,t)\norm_{L^1(K)}<\frac{\ep}{3},\quad \sum_{j=1}^d\norm v_{x^j}^{0\delta}-v^0_{x^j}\norm_{L^1(\Gamma+T/\lambda_0+1)}
< \frac{\ep}{3}.$$
Since $v^{0\delta}$ is semiconcave, (3) implies that there exists $\alpha>0$ such that if $|\Delta|<\alpha$ we have  $\norm u^\delta{}^j_\Delta(\cdot,t)-v^\delta_{x^j}(\cdot,t)\norm_{L^1(K)}<\ep/3$.  
Now we estimate $\norm u^j_\Delta(\cdot,t)-u^\delta{}^j_\Delta(\cdot,t)\norm_{L^1(K)}$. Let $\Gamma^{l+1}$ be a set  of all indexes $m+\1$ such that $x_{m+\1}\subset\Gamma$ and $(x_{m+\1},t_{l+1})\in \G$. Define $\Gamma^{l}:=\{ m+\1+\omega\,|\, x_{m+\1}\in\Gamma^{l+1},\omega\in B\}, \Gamma^{l-1}, \ldots,\Gamma^0$ in a recurrent way.  
\begin{Prop}\label{L1-contraction}
Let $v^k_{m+\1}$, $\tilde{v}^k_{m+\1}$ satisfy the difference equation in (\ref{HJ-Delta}) with $\lambda<\lambda_1$, where $\lambda_1$ is mentioned in Theorem \ref{main1}. Set $u^k_m=(u^1{}^k_m,\ldots,u^d{}^k_m)$, $u^j{}^k_m:=(D_{x^j}v)^k_{m}$, $\tilde{u}^k_m=(\tilde{u}^1{}^k_m,\ldots,\tilde{u}^d{}^k_m)$, $\tilde{u}^j{}^k_m:=(D_{x^j}\tilde{v})^k_{m}$. Then,  we have 
 $$\sum_{m+\1\in\Gamma^{l+1}} \sum_{j=1}^d   |\tilde{u}^j{}^{l+1}_{m+\1}-u^j{}^{l+1}_{m+\1}|\le \sum_{m\in\Gamma^{l}}  \sum_{j=1}^d   |\tilde{u}^j{}^{l}_{m}-u^j{}^{l}_{m}|.$$ 
\end{Prop}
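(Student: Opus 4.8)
The plan is to reduce the claim to a discrete total-variation estimate for the scalar quantity $V:=v-\tilde v$ and then to exploit the gradient (curl-free) structure of the difference derivatives. First I would record that, by Theorem \ref{main1}(3), both $v$ and $\tilde v$ solve the recursion $v^{k+1}_{m''}=\frac{1}{2d}\sum_{\omega\in B}v^k_{m''+\omega}-\dt\,H(x_{m''},t_k,(D_xv)^k_{m''})+h\dt$, and that this map is \emph{monotone}: using the a priori bound $\|(D_xv)^k\|_\infty,\|(D_x\tilde v)^k\|_\infty\le 1+r+\alpha_2$ from Theorem \ref{main1}(2) together with $\lambda<\lambda_1$ and $\|H_p\|_\infty\le(d\lambda_1)^{-1}$ on the ball $\{\|p\|_\infty\le 1+r+\alpha_2\}$, each partial $\partial v^{k+1}_{m''}/\partial v^k_{m''+\omega}$ is nonnegative, and the map commutes with addition of constants. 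Applying the mean value theorem to the difference of the two recursions then gives $V^{l+1}_{m''}=\sum_{\omega\in B}c_{m'',\omega}V^l_{m''+\omega}$ with coefficients $c_{m'',\pm e_i}=\frac{1}{2d}\pm\frac{\lambda}{2}\phi^i_{m''}\ge 0$ and $\sum_{\omega}c_{m'',\omega}=1$, where $\phi^i_{m''}:=\int_0^1 H_{p^i}(x_{m''},t_l,(D_xv)^l_{m''}+s[(D_x\tilde v)^l_{m''}-(D_xv)^l_{m''}])\,ds$.

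Second, writing $w^j:=\tilde u^j-u^j=(D_{x^j}V)$, I would differentiate this convex-combination identity to obtain
$$w^j{}^{l+1}_{m'}=\frac{1}{2d}\sum_{\omega\in B}w^j{}^l_{m'+\omega}+\frac{\lambda}{2}\big(G^l_{m'+e_j}-G^l_{m'-e_j}\big),\qquad G^l_{m''}:=H(x_{m''},t_l,\tilde u^l_{m''})-H(x_{m''},t_l,u^l_{m''}),$$
which is exactly the difference of the two Lax--Friedrichs updates for $u^j$ read off from the scheme. Since $G^l_{m'\pm e_j}=\sum_i\phi^i_{m'\pm e_j}w^i{}^l_{m'\pm e_j}$, the diagonal ($i=j$) part of the flux difference combines with the averaging term to produce nonnegative, unit-mass self-advection coefficients in the $w^j$-component, precisely as in the one-dimensional case treated in \cite{Soga2}.

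Third --- and this is the genuinely multidimensional point --- I would treat the off-diagonal ($i\ne j$) contributions of $G$ by means of the \emph{discrete curl-free identity} that $w$ inherits from being a discrete gradient of $V$: a direct computation from $w^i=(D_{x^i}V)$ yields $w^i{}^l_{m'+e_j}-w^i{}^l_{m'-e_j}=w^j{}^l_{m'+e_i}-w^j{}^l_{m'-e_i}$ for all $i,j,m'$. The aim is to use this to convert each cross flux $\phi^i_{m'+e_j}w^i_{m'+e_j}-\phi^i_{m'-e_j}w^i_{m'-e_j}$ into a self-advection increment of the $w^j$-component, so that $w^j{}^{l+1}_{m'}$ is exhibited as a combination of the \emph{same-component} values $\{w^j{}^l_{m'+\omega}\}_{\omega\in B}$ with nonnegative coefficients summing to $1$. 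Granting this reduction, the estimate is immediate: taking absolute values gives $|w^j{}^{l+1}_{m'}|\le\sum_\omega(\text{coeff})\,|w^j{}^l_{m'+\omega}|$, and summing over $j$ and over $m+\1\in\Gamma^{l+1}$ while reindexing each $\omega$-shift collapses the right-hand side to $\sum_{m\in\Gamma^l}\sum_j|w^j{}^l_m|$; here the definition of $\Gamma^l$ as the full $B$-neighborhood of $\Gamma^{l+1}$ ensures that every index $m'+\omega$ already lies in $\Gamma^l$, so no mass leaves the cone.

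I expect the third step to be the main obstacle. The subtlety is that the mean-value coefficients $\phi^i_{m'+e_j}$ and $\phi^i_{m'-e_j}$ are evaluated at distinct grid points and along distinct segments, so the curl-free substitution does not literally turn the cross fluxes into a clean same-component convex combination unless this discrepancy is reconciled; controlling it, and thereby verifying that the induced scheme for the \emph{vector} $w$ is still monotone with unit column sums so that the anisotropic total variation $\sum_{m,j}|w^j{}^l_m|$ cannot increase, is where the discrete gradient structure must be used decisively, and is the ingredient with no one-dimensional analogue. By contrast, the monotonicity of the scalar recursion, the curl-free identity, and the telescoping over the cone are routine.
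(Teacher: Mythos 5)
Your step 2 identity is precisely the paper's own starting point: differencing the two update formulas for $v^{l+1}_{m+\1\pm e_j}$ coming from (\ref{HJ-Delta}) and applying the mean value theorem gives, with $w^i:=\tilde{u}^i-u^i$,
$$w^j{}^{l+1}_{m+\1}=\frac{1}{2d}\sum_{i=1}^d\big(w^j{}^l_{m+\1+e_i}+w^j{}^l_{m+\1-e_i}\big)-\frac{\lambda}{2}\sum_{i=1}^d\big(\zeta^i{}^l_{m+\1+e_j}w^i{}^l_{m+\1+e_j}-\zeta^i{}^l_{m+\1-e_j}w^i{}^l_{m+\1-e_j}\big),$$
which is exactly the paper's formula with your $\phi^i$ in place of $\zeta^i$ (your step 1 is not needed). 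But your step 3, which you yourself leave unproven (``granting this reduction''), is a genuine gap, and the obstacle you flag is fatal rather than technical. Decompose the cross flux as $\zeta^i{}^l_{m+\1+e_j}\big(w^i{}^l_{m+\1+e_j}-w^i{}^l_{m+\1-e_j}\big)+\big(\zeta^i{}^l_{m+\1+e_j}-\zeta^i{}^l_{m+\1-e_j}\big)w^i{}^l_{m+\1-e_j}$: the curl-free identity converts only the first piece into same-component increments of $w^j$, while the commutator piece still couples component $i$ with a signless coefficient that is $O(1)$, not $O(\dx)$, because $\zeta^i$ depends on $u^l,\tilde{u}^l$ at the two points and these discrete gradients are merely bounded (Theorem \ref{main1}(2)); there is no discrete Lipschitz or one-sided entropy bound for $d\ge2$, as the paper itself emphasizes in the discussion after Remark \ref{remark22}. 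The best such a decomposition yields is $\sum_{m,j}|w^j{}^{l+1}|\le(1+C\lambda/\lambda_1)\sum_{m,j}|w^j{}^{l}|$, and since the estimate must be iterated over $\sim T/\dt$ steps, this constant degenerates as $\Delta\to0$ under hyperbolic scaling, whereas the Proposition asserts the exact constant $1$. Note also that your intermediate target --- exhibiting each $w^j{}^{l+1}$ as a same-component convex combination, i.e.\ pointwise monotonicity of the induced \emph{vector} scheme --- is strictly stronger than the Proposition, and nothing in the structure of the scheme supports it.

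The paper's proof takes a different, global route that never seeks pointwise monotonicity and never invokes the curl-free structure of $w$. It multiplies the $j$-th identity above by the sign $\sigma^j{}^{l+1}_{m+\1}$ of $w^j{}^{l+1}_{m+\1}$, so that the left side becomes $\sum_j|w^j{}^{l+1}_{m+\1}|$, sums over $m+\1\in\Gamma^{l+1}$, and regroups the right-hand side by the level-$l$ points. Each interior point $m\in\Gamma^l\setminus\partial\Gamma^l$ then collects, for every pair $(i,j)$, the two cross-flux contributions $+\frac{\lambda}{2}\zeta^i{}^l_{m}\sigma^j{}^{l+1}_{m+e_j}$ and $-\frac{\lambda}{2}\zeta^i{}^l_{m}\sigma^j{}^{l+1}_{m-e_j}$ coming from its two neighbours $m\pm e_j\in\Gamma^{l+1}$; crucially, both carry the \emph{same} factor $\zeta^i{}^l_m$ evaluated at the common level-$l$ point, so the evaluation-point mismatch that blocks your argument never appears. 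The paper cancels these paired terms (its ``noting cancelation'' step --- this pairing is where all the sign bookkeeping of the argument is concentrated), leaving as coefficient of $w^i{}^l_m$ only $\sum_j\frac{1}{2d}\big(\sigma^i{}^{l+1}_{m+e_j}+\sigma^i{}^{l+1}_{m-e_j}\big)$, of modulus at most $1$; at boundary points $m\in\partial\Gamma^l$ the unpaired flux terms survive, and the CFL bound $\frac{\lambda}{2}|\zeta^i|\le\frac{1}{2d}$ gives $|\delta^{ij}_m|\le 1/d$, hence again total coefficients of modulus at most $1$. The inequality is thus obtained by bounding coefficients in one global sign-weighted sum over the cone, with cross terms disposed of by pairing across adjacent cells plus boundary bookkeeping. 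To repair your proposal you would either have to control the commutator terms (which the available a priori bounds do not permit) or abandon the pointwise reduction and switch to this summation-with-cancellation argument.
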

\begin{proof}
By the difference equation in (\ref{HJ-Delta}), we have 
\begin{eqnarray*}  
 v^{l+1}_{m+\1+e_j}&=&\frac{1}{2d}\sum_{i=1}^d (v^l_{m+\1+e_j+e_i}+v^l_{m+\1+e_j-e_i})-H(x_{m+\1+e_j},t_l,u^l_{m+\1+e_j})\dt,\\
 v^{l+1}_{m+\1-e_j}&=&\frac{1}{2d}\sum_{i=1}^d (v^l_{m+\1-e_j+e_i}+v^l_{m+\1-e_j-e_i})-H(x_{m+\1-e_j},t_l,u^l_{m+\1-e_j})\dt, 
\end{eqnarray*} 
which yields 
\begin{eqnarray*}
u^j{}^{l+1}_{m+\1}&=&\frac{1}{2d}\sum_{i=1}^d (u^j{}^l_{m+\1+e_i}+u^j{}^l_{m+\1-e_i})\\
&&-\frac{\lambda}{2}(H(x_{m+\1+e_j},t_l,u^l_{m+\1+e_j})-H(x_{m+\1-e_j},t_l,u^l_{m+\1-e_j})).
\end{eqnarray*}
Hence, we obtain 
\begin{eqnarray*}
\tilde{u}^j{}^{l+1}_{m+\1}-u^j{}^{l+1}_{m+\1}&=&\frac{1}{2d}\sum_{i=1}^d \{ (\tilde{u}^j{}^l_{m+\1+e_i}-u^j{}^l_{m+\1+e_i})+(\tilde{u}^j{}^l_{m+\1-e_i}-u^j{}^l_{m+\1-e_i}) \}\\
&&-\frac{\lambda}{2}\Big(H(x_{m+\1+e_j},t_l,\tilde{u}^l_{m+\1+e_j})-H(x_{m+\1+e_j},t_l,u^l_{m+\1+e_j})\\
&&-H(x_{m+\1-e_j},t_l,\tilde{u}^l_{m+\1-e_j})+H(x_{m+\1-e_j},t_l,u^l_{m+\1-e_j})\Big)\\
&=&\frac{1}{2d}\sum_{i=1}^d \{ (\tilde{u}^j{}^l_{m+\1+e_i}-u^j{}^l_{m+\1+e_i})+(\tilde{u}^j{}^l_{m+\1-e_i}-u^j{}^l_{m+\1-e_i}) \}\\
&&-\frac{\lambda}{2}\sum_{i=1}^d\{ \zeta^i{}^l_{m+\1+e_j} (\tilde{u}^i{}^l_{m+\1+e_j}-u^i{}^l_{m+\1+e_j})\\
&&-\zeta^i{}^l_{m+\1-e_j}(\tilde{u}^l_{m+\1-e_j}-u^l_{m+\1-e_j})\},
\end{eqnarray*} 
where $ \zeta^i{}^l_{m+\1\pm e_j}:=H_{p^i}(x_{m+\1\pm e_j},t_l,u^l_{m+\1\pm e_j}+\theta^l_{m+\1\pm e_j} (\tilde{u}^l_{m+\1\pm e_j}-u^l_{m+\1\pm e_j}))$ with $\theta^l_{m+\1\pm e_j}\in(0,1)$ coming from the Taylor expansion. Let $\sigma^j{}^{l+1}_{m+\1}=\pm1$ denote the sign of  $\tilde{u}^j{}^{l+1}_{m+\1}-u^j{}^{l+1}_{m+\1}$. Then, we obtain 
\begin{eqnarray*}
\sum_{j=1}^d|\tilde{u}^j{}^{l+1}_{m+\1}-u^j{}^{l+1}_{m+\1}|&=&\sum_{i,j=1}^d \Big( \frac{1}{2d}\sigma^i{}^{l+1}_{m+\1} -\frac{\lambda}{2} \zeta^i{}^{l}_{m+\1+e_j}\sigma^j{}^{l+1}_{m+\1}\Big)\big(\tilde{u}^i{}^l_{m+\1+e_j}-u^i{}^l_{m+\1+e_j}\big)\\
&+&\sum_{i,j=1}^d \Big( \frac{1}{2d}\sigma^i{}^{l+1}_{m+\1} +\frac{\lambda}{2} \zeta^i{}^{l}_{m+\1-e_j}\sigma^j{}^{l+1}_{m+\1}\Big)\big(\tilde{u}^i{}^l_{m+\1-e_j}-u^i{}^l_{m+\1-e_j}\big).
\end{eqnarray*} 
Define $\partial\Gamma^l:=\{ m\in\Gamma^l \,|\, \{ m\pm e_j\,|\,j=1,\ldots,d \}\not\subset \Gamma^{l+1} \}$. Noting cancelation, we obtain 
\begin{eqnarray*}
\sum_{m+\1\in\Gamma^{l+1}}\sum_{j=1}^d|\tilde{u}^j{}^{l+1}_{m+\1}-u^j{}^{l+1}_{m+\1}|&=&\sum_{m\in \Gamma^l\setminus\partial\Gamma^l}\sum_{i=1}^d \Big\{\sum_{j=1}^d \frac{1}{2d}\Big(\sigma^i{}^{l+1}_{m+e_j}+\sigma^i{}^{l+1}_{m-e_j}\Big)\Big\}(\tilde{u}^i{}^l_{m}-u^i{}^l_{m})\\
&+&\sum_{m\in  \partial\Gamma^l}\sum_{i=1}^d \Big( \sum_{j=1}^d \delta^{ij}_m\Big)(\tilde{u}^i{}^l_{m}-u^i{}^l_{m}),
\end{eqnarray*} 
where 
\begin{eqnarray*}
\delta^{ij}_m:=\left\{
\begin{array}{lll}
&\dis \frac{1}{2d}\Big(\sigma^i{}^{l+1}_{m+e_j}+\sigma^i{}^{l+1}_{m-e_j}\Big), \mbox{\qquad  if $m+e_j\in\Gamma^{l+1}$, $m- e_j\in\Gamma^{l+1}$},\medskip \\
&\dis \frac{1}{2d}\sigma^i{}^{l+1}_{m-e_j}-\frac{\lambda}{2}\zeta^i{}^l_{m}\sigma^j{}^{l+1}_{m-e_j}, \mbox{ if $m+ e_j\not\in\Gamma^{l+1}$, $m- e_j\in\Gamma^{l+1}$},\medskip\\
&\dis \frac{1}{2d}\sigma^i{}^{l+1}_{m+e_j}+\frac{\lambda}{2}\zeta^i{}^l_{m}\sigma^j{}^{l+1}_{m+e_j}, \mbox{ if $m+ e_j\in\Gamma^{l+1}$, $m- e_j\not\in\Gamma^{l+1}$},\medskip\\
& 0,  \mbox{ \qquad\qquad\qquad\qquad\qquad if $m+ e_j\not\in\Gamma^{l+1}$, $m- e_j\not\in\Gamma^{l+1}$}.
\end{array}
\right.
\end{eqnarray*}
 Due to the choice $\lambda< \lambda_1$, we have $|\delta^{ij}_m|\le1/d$ and 
 $$ \Big|\sum_{j=1}^d \frac{1}{2d}\Big(\sigma^i{}^{l+1}_{m+e_j}+\sigma^i{}^{l+1}_{m-e_j}\Big)\Big|\le1,\quad \Big| \sum_{j=1}^d \delta^{ij}_m\Big|\le 1,$$
 which completes the proof.
\end{proof}
\noindent Note that hyperbolic scaling implies $\{x_m \,|\,m\in\Gamma^0 \}\subset \Gamma+T/\lambda_0$. By Proposition  \ref{L1-contraction} with $\tilde{v}^k_{m+\1}=v^\delta{}^k_{m+\1}$, $\tilde{u}^j{}^k_m=u^{j\delta}{}^k_{m}$, we obtain 
\begin{eqnarray} \nonumber
&&\norm u^j_\Delta(\cdot,t)-u^\delta{}^j_\Delta(\cdot,t)\norm_{L^1(K)}
\le \sum_{m+\1\in\Gamma^{l+1}} \sum_{j=1}^d   |\tilde{u}^j{}^{l+1}_{m+\1}-u^j{}^{l+1}_{m+\1}|(2\dx)^d\\\nonumber
&&\le \sum_{m\in\Gamma^0} \sum_{j=1}^d   |\tilde{u}^j{}^{0}_{m}-u^j{}^{0}_{m}|(2\dx)^d\\\label{313}
&&= \sum_{m\in\Gamma^0} \sum_{j=1}^d   \Big |
 \frac{1}{(2\dx)^{d}} \int_{[-\dx,\dx]^d}\Big(\frac{1}{2\dx}\int_{-\dx}^{\dx} v_{x^j}^{0\delta}(x_m+se_j+y) \\\nonumber
 &&\qquad\qquad\qquad\qquad\qquad\qquad  -v^0_{x^j}(x_m+se_j+y)ds\Big)dy\Big|(2\dx)^d\\\nonumber
 &&\le \sum_{j=1}^d\norm v_{x^j}^{0\delta}-v^0_{x^j}\norm_{L^1(\Gamma+T/\lambda_0+1)}<\frac{\ep}{3}.
\end{eqnarray}
This concludes the proof.
\end{proof} 
The reason why we need (\ref{initial-2}) is to obtain the $L^1$-norm of the measurable function $v_{x^j}^{0\delta}-v^0_{x^j}$ from (\ref{313}). If $d=1$, it is clear that we may follow the above proof of (4) with \eqref{initial-1}.   

We conclude this section with discussion on Remark \ref{remark22}.  Currently, we fail to obtain (3) of Theorem \ref{main2} without semiconcavity of initial data, if $d\ge2$. We will see the difficulty and how to overcome it for $d=1$. Without semiconcavity, uncountably many minimizing curves $\gamma^\ast(s)$ meet at one point at $s=0$ (so-called ``rarefaction'') and $v^0_x(\gamma^\ast(0))$ does not exist for such $\gamma^\ast$. In such a case, we may use the following formula: For any $\tau\in[0,t)$,  
\begin{eqnarray}\label{555555}
v_{x^j}(x,t)=\int^t_\tau L_{x^j}(\gamma^\ast(s),s,\gamma^\ast{}'(s))ds+L_{\xi^j}(\gamma^\ast(\tau),\tau,\gamma^\ast{}'(\tau)).
\end{eqnarray}
We have a discrete version of \eqref{555555}: We observe that for each $e_j$,
\begin{eqnarray*}
v^{l+1}_{n+\1-e_j}&=&\sum_{\gamma\in\Omega^{l+1,0}_{n+\1-e_j}}\mu^{l+1,0}_{n+\1-e_j}(\gamma;\xi^\ast|_{G^{l+1,1}_{n+\1-e_j}})\Big( \sum_{0<k\le l+1}L(\gamma^k,t_{k-1},\xi^\ast{}^k_{m(\gamma^k)})\dt\\
&&+v^0(\gamma^0) \Big)+ ht_{l+1}\\
&=&\sum_{\gamma\in\Omega^{l+1,k(\tau)}_{n+\1-e_j}}\mu^{l+1,k(\tau)}_{n+\1-e_j}(\gamma;\xi^\ast|_{G^{l+1,k(\tau)+1}_{n+\1-e_j}})\Big( \sum_{k(\tau)<k\le l+1}L(\gamma^k,t_{k-1},\xi^\ast{}^k_{m(\gamma^k)})\dt\\
&&+v^{k(\tau)}_{m(\gamma^{k(\tau)})} \Big)+ h(t_{l+1}-t_{k(\tau)}).
\end{eqnarray*}
For each $e_j$, define the control $\zeta$ on $G^{l+1,1}_{n+\1+e_j}$ as
\begin{eqnarray*}
\zeta(x_m,t_{k+1}):=\left\{
\begin{array}{lll}
& \xi^\ast(x_m-e_j\cdot2\dx,t_{k+1})   \mbox{ \,\,\,for $k(\tau)<k+1\le l+1$,} \\
& \xi^\ast(x_m,t_{k+1})   \mbox{ \,\,\,for $0<k+1\le k(\tau)$.} 
\end{array}
\right.
\end{eqnarray*}
Then, we have 
\begin{eqnarray*}
v^{l+1}_{n+\1+e_j}&\le& \sum_{\gamma\in\Omega^{l+1,0}_{n+\1+e_j}}\mu^{l+1,0}_{n+\1+e_j}(\gamma;\zeta)\Big( \sum_{0<k\le l+1}L(\gamma^k,t_{k-1},\zeta^{k}_{m(\gamma^k)})\dt +v^0(\gamma^k)  \Big) + ht_{l+1}\\
&=&\sum_{\gamma\in\Omega^{l+1,k(\tau)}_{n+\1-e_j}}\mu^{l+1,k(\tau)}_{n+\1-e_j}(\gamma;\xi^\ast|_{G^{l+1,k(\tau)+1}_{n+\1-e_j}})\\
&&\times\Big( \sum_{k(\tau)<k\le l+1}L(\gamma^k+e_j\cdot2\dx,t_{k-1},\xi^\ast{}^{k}_{m(\gamma^k)})\dt
 +v^{k(\tau)}_{m(\gamma^{k(\tau)}+e_j\cdot2\dx)}  \Big)\\
 &&+ h(t_{l+1}-t_{k(\tau)}).
\end{eqnarray*}
Hence, with Lemma \ref{limit theorem}, we obtain 
\begin{eqnarray*}
(D_{x^j}v)^{l+1}_{n+\1}&=&\frac{v^{l+1}_{n+\1+e_j}-v^{l+1}_{n+\1-e_j}}{2\dx}\\
&\le& \sum_{\gamma\in\Omega^{l+1,k(\tau)}_{n+\1-e_j}}\mu^{l+1,k(\tau)}_{n+\1-e_j}(\gamma;\xi^\ast|_{G^{l+1,k(\tau)+1}_{n+\1-e_j}})\\
&&\times\Big( \sum_{k(\tau)<k\le l+1}L_{x^j}(\gamma^k,t_{k-1},\xi^\ast{}^{k}_{m(\gamma^k)})\dt+ (D_{x^j}v)^{k(\tau)}_{m(\gamma^{k(\tau)}+e_j\dx)}  \Big)\\
&&+\beta_8\dx\\
&\le&  \sum_{\gamma\in\Omega^{l+1,k(\tau)}_{n+\1-e_j}}\mu^{l+1,k(\tau)}_{n+\1-e_j}(\gamma;\xi^\ast|_{G^{l+1,k(\tau)+1}_{n+\1-e_j}})\\
&&\times\Big(\int^t_\tau L_{x^j}(\eta_\Delta(\gamma)(s),s,\eta_\Delta(\gamma)'(s))ds+ (D_{x^j}v)^{k(\tau)}_{m(\gamma^{k(\tau)}+e_j\dx)}  \Big)\\
&&+\beta_9\sqrt{\dx}.
\end{eqnarray*}
Now, we compare this inequality and (\ref{555555}). 
Since we already know about the convergence of the minimizing random walks $\gamma$ and $\eta(\gamma)$ to $\gamma^\ast$, it is enough to estimate 
\begin{eqnarray}\label{zure}
&& \sum_{\gamma\in\Omega^{l+1,k(\tau)}_{n+\1-e_j}}\mu^{l+1,k(\tau)}_{n+\1-e_j}(\gamma;\xi^\ast|_{G^{l+1,k(\tau)+1}_{n+\1-e_j}})\Big((D_{x^j}v)^{k(\tau)}_{m(\gamma^{k(\tau)}+e_j\dx)} \Big)-L_{\xi^j}(\gamma^\ast(\tau),\tau,\gamma^\ast{}'(\tau))\\\nonumber
&&\quad =\sum_{\gamma\in\Omega^{l+1,k(\tau)}_{n+\1-e_j}}\mu^{l+1,k(\tau)}_{n+\1-e_j}(\gamma;\xi^\ast|_{G^{l+1,k(\tau)+1}_{n+\1-e_j}})\\\nonumber
&&\quad\quad\quad\quad\quad\quad\times\Big(L_{\xi^j}(\gamma^{k(\tau)}+e_j\dx,t_{k(\tau)},{\xi^\ast}_{m(\gamma^{k(\tau)}+e_j\dx)}^{k(\tau)+1})-L_{\xi^j}(\gamma^\ast(\tau),\tau,\gamma^\ast{}'(\tau))\Big), 
\end{eqnarray}
where we note that $(D_{x^j}v)^k_m=L_{\xi^j}(x_m,t_k,{\xi^\ast}_m^{k+1})$. Unfortunately, ${\xi^\ast}_{m(\gamma^{k(\tau)}+e_j\dx)}^{k(\tau)+1}$ is NOT equal to ${\xi^\ast}_{m(\gamma^{k(\tau)+1})}^{k(\tau)+1}$, i.e., $\gamma^{k(\tau)+1}$ is not necessarily equal to $\gamma^{k(\tau)}+e_j\dx$, which prevents us from direct application of the $L^2$-convergence of $\eta_\Delta(\gamma)'(\cdot)$ to $\gamma^\ast{}'(\cdot)$. 

If $d=1$, we manage to estimate (\ref{zure}) by means of the one-sided Lipschitz estimate of the discrete derivative, or the entropy condition, proved in Proposition 2.8 of \cite{Soga3} (this holds also in our whole space setting): There exists $M^k>0$ independent of $\Delta$ such that 
$$\frac{(D_{x}v)^k_{m+2}-(D_{x}v)^k_m}{2\dx}\le M^k.$$   
Let $\Omega_x$ be the set of all $\gamma \in \Omega^{l+1,k(\tau)+1}_{n}$ such that $\gamma^{k(\tau)+1}=x$.  We have
\begin{eqnarray*}
 &&\sum_{\gamma\in\Omega^{l+1,k(\tau)}_{n}}\mu^{l+1,k(\tau)}_{n}(\gamma;\xi^\ast|_{G^{l+1,k(\tau)+1}_{n}})\Big((D_{x}v)^{k(\tau)}_{m(\gamma^{k(\tau)}+\dx)}  - (D_{x}v)^{k(\tau)}_{m(\gamma^{k(\tau)+1})}\Big) \\
 &&=\sum_{x\in X^{l+1,k(\tau)+1}_{n}}\sum_{\gamma\in\Omega_x}\mu^{l+1,k(\tau)+1}_{n}(\gamma;\xi^\ast|_{G^{l+1,k(\tau)+2}_{n}}) \Big\{
 \rho^{k(\tau)+1}_{m(x)} (+1) \Big((D_{x}v)^{k(\tau)}_{m(x+\dx+\dx)} \\
&& \quad - (D_{x}v)^{k(\tau)}_{m(x)}\Big)
 +  \rho^{k(\tau)+1}_{m(x)} (-1) \Big((D_{x}v)^{k(\tau)}_{m(x-\dx+\dx)}  - (D_{x}v)^{k(\tau)}_{m(x)}\Big) \Big\}  \\
 &&\le M^{k(\tau)}\cdot 2\dx.
\end{eqnarray*}
Hence, we obtain 
\begin{eqnarray*}
\eqref{zure}|_{d=1}&\le&  \sum_{\gamma\in\Omega^{l+1,k(\tau)}_{n}}\mu^{l+1,k(\tau)}_{n}(\gamma;\xi^\ast|_{G^{l+1,k(\tau)+1}_{n}})(D_{x^j}v)^{k(\tau)}_{m(\gamma^{k(\tau)+1})} +M^{k(\tau)}\cdot 2\dx\\
&&-L_{\xi}(\gamma^\ast(\tau),\tau,\gamma^\ast{}'(\tau))\\
&=&   \sum_{\gamma\in\Omega^{l+1,k(\tau)}_{n}}\mu^{l+1,k(\tau)}_{n}(\gamma;\xi^\ast|_{G^{l+1,k(\tau)+1}_{n}})\Big(L_\xi(\gamma^{k(\tau)+1},t_{k(\tau)},\xi^\ast{}^{k(\tau)+1}_{m(\gamma^{k(\tau)+1})})\\
&&-L_{\xi}(\gamma^\ast(\tau),\tau,\gamma^\ast{}'(\tau))\Big) +M^{k(\tau)}\cdot 2\dx\\
&=&   \sum_{\gamma\in\Omega^{l+1,0}_{n}}\mu^{l+1,0}_{n}(\gamma;\xi^\ast|_{G^{l+1,1}_{n}})\Big(L_\xi(\gamma^{k(\tau)+1},t_{k(\tau)},\xi^\ast{}^{k(\tau)+1}_{m(\gamma^{k(\tau)+1})})\\
&&-L_{\xi}(\gamma^\ast(\tau),\tau,\gamma^\ast{}'(\tau))\Big) +M^{k(\tau)}\cdot 2\dx.
\end{eqnarray*}
According to the above proof of (2) of Theorem \ref{main2},  for any $\ep>0$, there exists $\delta>0$ such that if $|\Delta|< \delta$ we have 
\begin{eqnarray*}
E_{\mu^{l+1,0}_{n}(\cdot;\xi^\ast|_{G^{l+1,1}_{n}})}[\norm \eta_\Delta(\gamma){}'-\gamma^\ast{}'\norm_{L^1([0,t])}]&\le& E_{\mu^{l+1,0}_{n}(\cdot;\xi^\ast|_{G^{l+1,1}_{n}})}[\sqrt{t}\norm \eta_\Delta(\gamma){}'-\gamma^\ast{}'\norm_{L^2([0,t])}]\\
&<& \ep.
\end{eqnarray*}
 Therefore, there exists $\tau\in[t/2,t]$ such that 
 \begin{eqnarray*}
 E_{\mu^{l+1,0}_{n}(\cdot;\xi^\ast|_{G^{l+1,1}_{n}})}[| \eta_\Delta(\gamma){}'(\tau)-\gamma^\ast{}'(\tau)|]&=&E_{\mu^{l+1,0}_{n}(\cdot;\xi^\ast|_{G^{l+1,1}_{n}})}[| \xi^\ast{}^{k(\tau)+1}_{m(\gamma^{k(\tau)+1})}-\gamma^\ast{}'(\tau)|]\\
 &<&2\ep/t.
 \end{eqnarray*}
Since $M^{k(\tau)}$ with $\tau\ge t/2$ is bounded for $\Delta\to0$, we conclude that $\limsup_{\Delta\to0} \eqref{zure}|_{d=1}=0$ and $\limsup_{\Delta\to0}(u_\Delta (x,t)-v_x(x,t))=0$. In a similar way, we obtain  $\liminf_{\Delta\to0}(u_\Delta (x,t)-v_x(x,t))=0$.
Here is summary of remarks on the case $d=1$: All the results hold with (\ref{initial-1}); If the entropy condition is known,  a.e. pointwise convergence of the derivative can be proved  for any  initial data; Even if the entropy condition is not known, the $L^1$-convergence of the derivative can be proved for any initial data with (\ref{initial-1}).



\begin{thebibliography}{9}
\bibitem{BFZ}
A. Bouillard,  E. Faou and M. Zavidovique, Fast weak-KAM integrators for separable Hamiltonian systems, Math. Comp., \bf 85 \rm (2016), No. 297, 85-117. 
%
\bibitem{Cannarsa}
P. Cannarsa and C. Sinestrari, Semiconcave functions, Hamilton-Jacobi equations and optimal control, Birkh\"auser \rm(2004).
%
\bibitem{Crandall-Lions}
M. G. Crandall and P. L. Lions, Two approximations of solutions of Hamilton-Jacobi equations,  Math. Comp.  \bf43  \rm(1984),  No. 167, 1-19.
%
\bibitem{Fathi-book}
A. Fathi, Weak KAM theorem in Lagrangian dynamics,  Cambridge Univ. Pr. (2011).
%
\bibitem{Fleming}
W. H. Fleming, The Cauchy problem for a nonlinear first order partial differential equation,  \rm J. Differ. Eqs  \bf5  \rm(1969), 515-530.
%
\bibitem{Nishida-Soga}
T. Nishida and K. Soga, Difference approximation to Aubry-Mather sets of the forced Burgers equation, Nonlinearity \bf 25 \rm(2012), 2401-2422.
%
\bibitem{Oleinik}
O. A. Oleinik, Discontinuous solutions of nonlinear differential equations, \rm A. M. S. Transl. (ser. 2) \bf 26 \rm (1957), 95-172.
%
\bibitem{Soga1}
K. Soga, Space-time continuous limit of random walks with hyperbolic scaling, Nonlinear Analysis \bf102 \rm(2014), 264-271.
%
\bibitem{Soga2}
K. Soga, Stochastic and variational approach to the Lax-Friedrichs scheme,  Math. Comp. \bf 84 \rm (2015), No. 292, 629-651.
%
\bibitem{Soga3}
K. Soga, More on stochastic and variational approach to the Lax-Friedrichs scheme, Math. Comp. \bf 85 \rm (2016), No. 301, 2161-2193.
%
\bibitem{Soga4}
K. Soga, Selection problems of $\Z^2$-periodic entropy solutions and viscosity solutions, Cal. Var. PDEs, \bf 56, \rm 4 (2017), 10.1007/s00526-017-1208-7.
%
 \bibitem{Souganidis}
 P. E. Souganidis, Approximation schemes for viscosity solutions of Hamilton-Jacobi equations,  \rm J. Differ. Eqs. \bf 59 \rm (1985), 1-43.
 %
\end{thebibliography}
\end{document}